 \newtheorem{thm}{Theorem}[section]
 \newtheorem{lem}[thm]{Lemma}
 \newtheorem{prop}[thm]{Proposition}
 \theoremstyle{definition}
 \newtheorem{defn}[thm]{Definition}
 \theoremstyle{remark}
 \newtheorem{rem}[thm]{Remark}
 \newtheorem{ex}[thm]{Example}
 \newtheorem{prob}[thm]{Problem}
 \newtheorem{conj}[thm]{Conjecture}
 \numberwithin{equation}{section}
\newcommand{\thmref}[1]{Theorem~\ref{#1}}
\newcommand{\lemref}[1]{Lemma~\ref{#1}}
\newcommand{\proref}[1]{Proposition~\ref{#1}}
\newcommand{\remref}[1]{Remark~\ref{#1}}
\newcommand{\exref}[1]{Example~\ref{#1}}
\newcommand{\conjref}[1]{Conjecture~\ref{#1}}
\newcommand{\figref}[1]{Figure~\ref{#1}}
\newcommand{\sref}[1]{Section~\ref{#1}}
\DeclareMathOperator{\spec}{Spec}
\DeclareMathOperator{\supp}{Supp}
\DeclareMathOperator{\nr}{nr}
\DeclareMathOperator{\brr}{\bar{r}}
\DeclareMathOperator{\di}{div}
\DeclareMathOperator{\lcm}{lcm}
\DeclareMathOperator{\mult}{mult}
\DeclareMathOperator{\gon}{gon}
\DeclareMathOperator{\cf}{cff}
\newcommand{\m}{\mathfrak m}
\newcommand{\PP}{\mathbb P}
\newcommand{\Z}{\mathbb Z}
\newcommand{\Q}{\mathbb Q}
\newcommand{\R}{\mathbb R}
\newcommand{\C}{\mathbb C}
\newcommand{\bbZ}{\ensuremath{\mathbb Z}}
\newcommand{\cA}{\mathcal A}
\newcommand{\cF}{\mathcal F}
\newcommand{\cI}{\mathcal I}
\newcommand{\cO}{\mathcal O}
\renewcommand{\t}{\widetilde}
\renewcommand{\:}{\colon}
\newcommand{\ol}[1]{\overline {#1}}
\newcommand{\fl}[1]{\left\lfloor #1 \right\rfloor}
\newcommand{\ce}[1]{\left\lceil #1 \right\rceil}
\newcommand{\defset}[2]{{\left\{#1\,\left| \,#2 \right. \right\}}}
\newcommand{\V}{(V,p)}
\newcommand{\minpg}{\text{min-}p_g}
\newcommand{\la}[2]{\lambda^{(#1)}_{#2}}
\begin{document}
%
%
%
%
%
%
%
%
%
\title[Normal reduction numbers of normal surface singularities]
 {Normal reduction numbers of normal surface singularities}
\author[Tomohiro Okuma]{Tomohiro Okuma}

\address{Department of Mathematical Sciences \\
Yamagata University \\
 Yamagata, 990-8560 \\
Japan.}
\email{okuma@sci.kj.yamagata-u.ac.jp}

%

\thanks{This work was partially supported by JSPS Grant-in-Aid 
for Scientific Research (C) Grant Number 17K05216}

\subjclass{Primary 14J17; Secondary 14B05, 32S25, 13B22}

\keywords{Normal reduction number, normal surface singularity, geometric genus, elliptic singularity, Brieskorn complete intersection, homogeneous hypersurface singularity}

\dedicatory{Dedicated to Professor Andr\'as N\'emethi 
on the occasion of his sixtieth birthday}

\begin{abstract}
This article consists of two parts. 
The first part is a survey on the normal reduction numbers of normal surface singularities. It includes results on elliptic singularities, cone-like singularities and homogeneous hypersurface singularities.
In the second part, we prove a new results on the normal reduction numbers and related invariants of Brieskorn complete intersections.
\end{abstract}

\maketitle

\section{Introduction}

In this paper, we survey results on the normal reduction numbers of normal complex surface singularities and some related topics (\cite{o.h-ell}, \cite{OWYnrcn}, \cite{OWYrees}, \cite{OWYnrbr}), and prove a new results on the normal reduction numbers of Brieskorn complete intersections.
The normal reduction number has appeared in the study of normal Hilbert polynomials from a ring-theoretic point of view (cf. \cite{ito.Int}, \cite{MMV}).
We study the normal reduction numbers of the local ring of normal surface singularities using resolution of singularities, 
and we wish to know what kind of geometric property of singularities relates to the normal reduction numbers.

Let us briefly recall some basic facts about integral closure and reduction of ideals in a local ring. Let  $(A,\m)$ be a Noetherian local ring and  $I$ an $\m$-primary ideal (namely, $\sqrt{I}=\m$).  
Let $\overline{I}$ denote the integral closure of $I$, that is, $\ol{I}$ is an ideal of $A$ consists of all elements $z \in A$ such that  
$z^n+c_1z^{n-1}+\cdots+c_n=0$ 
for some $n \ge 1$ and $c_i \in I^i$ $(i=1,\ldots,n)$. 
The ideal $I$ is said to be integrally closed if $I=\ol{I}$.
An ideal $Q\subset I$ is called a {\em reduction} of $I$ if $I^{n+1}=QI^n$ for some $n\ge 0$. It is known that an ideal $Q$ is a reduction of $I$ if and only if $I \subset \ol{Q}$ (cf. \cite[1.2.5]{HS-book}).
For a reduction $Q$ of $I$, ${\mathrm r_Q(I)}:=\min\defset{n}{I^{n+1}=QI^n}$ is called the reduction number of $I$  with respect to $Q$. 

Let $\V$ be a normal complex surface singularity\footnote{In our papers \cite{o.h-ell}, \cite{OWYnrcn}, \cite{OWYrees}, \cite{OWYnrbr}, we treat a singularity $(\spec A, \m)$, where $(A,\m)$ is an excellent normal two-dimensional local ring such that the residue field $k$ is algebraically closed and $k\subset A$.} and $\cO_{V,p}$ the local ring of the singularity with maximal ideal $\m$.
Let $I\subset \cO_{V,p}$ be an $\m$-primary integrally closed ideal.
It is known that any minimal reduction of $I$ is generated by two elements and that two general elements of $I$ generate a minimal reduction of $I$ (see \cite[8.3.7, 8.6.6]{HS-book}). 
Suppose that $Q$ is a minimal reduction of $I$.
We define two normal reduction numbers, which are analogues of the reduction number $r_Q(I)$,  as follows:
\begin{align*}
\nr(I) &= \min\{n \in \bbZ_{\ge 0} \,|\, \overline{I^{n+1}}=Q\overline{I^n}\}, \\
\brr(I) &= \min\{n \in \bbZ_{\ge 0} \,|\, \overline{I^{N+1}}=Q\overline{I^N} \; \text{for every $N \ge n$}\}.
\end{align*}
 We note that $\nr(I)$ and $\brr(I)$ are independent of the choice of $Q$ (see e.g. \cite[Theorem 4.5]{Hun.Hilb}, \proref{p:qr}), though $r_Q(I)$ is not independent of the choice of a minimal reduction $Q$ in general. 
It is obvious by the definition that $\nr(I)\le \brr(I)$. 
We will show that $\brr(I)\le p_g\V+1$ in general (see \proref{p:qr}). 
We can also show that for any integer $g\ge 2$ there exists a singularity $\V$ with $\nr(I)=1$ and $\brr(I) =p_g\V +1=g+1$ (Example \ref{nr<br}).
We define 
\begin{align*}
\nr\V&=\max\{\nr(J) \,|\, \text{$J$ is an $\m$-primary integrally closed ideal of $\cO_{V,p}$}\}, \\
\brr\V&=\max\{\brr(J) \,|\, \text{$J$ is an $\m$-primary integrally closed ideal of $\cO_{V,p}$}\}.
\end{align*}
The invariant $\brr\V$ naturally appears in several situation as follows. 
For any  $\m$-primary integrally closed ideal $I \subset \cO_{V,p}$, 
there exist a resolution $\pi\: X \to V$ and a divisor $Z$ on $X$ such that $\cO_X(-Z)$ is $\pi$-generated and $I=\pi_*\cO_X(-Z)_p$ (see \sref{s:CC}). Let $r:=\brr(I)$.
By the definition of $\brr$ and \proref{p:qr}, we have the following:
\begin{enumerate}
\item Brian\c con-Skoda type inclusion (cf. \cite{LT}, \cite{HH}): 
$\overline{I^{r+k}} \subset Q^k$ for $k\ge 1$.
\item The natural homomorphism $\pi_*\cO_X(-nZ) \otimes \pi_*\cO_X(-Z) \to \pi_*\cO_X(-(n+1)Z)$ is surjective for $n\ge r$. 
\item The function $\phi(n):=\dim_{\C}H^0(\cO_X)/H^0(\cO_X(-nZ))$  is a polynomial function of $n$ for $n\ge r$; note that $\phi(n)=\chi(\cO_{nZ})+h^1(\cO_X)-h^1(\cO_X(-nZ))$ by Kato's Riemann-Roch Theorem (\cite{kato}). 
\end{enumerate}
So we expect that the normal reduction numbers can characterize good singularities.  For example, we see that $\V$ is a rational singularity if and only if $\brr\V=1$ (see \proref{p:rat}). 
However, we can only show that $\brr\V=2$ if $\V$ is an elliptic singularity (see \thmref{t:ellr2}, \proref{p:br=2}).  
At present, we have computed the normal reduction numbers only for some special cases, and we do not know whether those invariants are topological or not. 

This paper is organized as follows.
Sections \ref{s:CC}--\ref{s:cone} are devoted to a survey of fundamental results on the normal reduction numbers and some related topics.
In \sref{s:CC}, we set up notation and briefly recall the basic results on the cohomology groups of ideal sheaves of cycles on a resolution space. Then we mention a question about the range of the dimension of those cohomology groups.  
In \sref{s:h-nr}, we give a relation between the normal reduction numbers and the dimension of the cohomology groups associated with an $\m$-primary integrally closed ideal in $\cO_{V,p}$ and review fundamental results on the normal reduction numbers.
Then we review the results on elliptic singularities.
In \sref{s:cone}, we consider the cone-like singularities, namely, those homeomorphic to the cone over a nonsingular curve. 
We give an upper bound of $\brr$ using the genus and gonality of the curve and the self-intersection number of the fundamental cycles. 
Then we show a formula for the normal reduction numbers of homogeneous hypersurface singularities.
In \sref{s:BCI}, we prove an explicit formula for $\brr$ of the maximal ideal of a Brieskorn complete intersection and apply the formula to classify elliptic singularities, which are natural generalization of the results about Brieskorn hypersurface singularities in \cite{OWYnrbr}.

\section{Cycles and Cohomology}\label{s:CC}

Let $(V,p)$ be a normal complex surface singularity, namely, the germ of a normal complex surface $V$ at $p\in V$.
We always assume that $V$ is Stein and suitably small.
Let $\pi\: X \to V$ denote a resolution of the singularity $(V,p)$ with exceptional set $E= \pi^{-1}(p)$ and let $\{E_i\}_{i\in \cI}$ denote the set of irreducible components of $E$.
We call a divisor on $X$ supported in $E$ a {\em cycle} and denote by $\sum \Z E_i$ the group of cycles.

For a function $h\in H^0(\cO_X(-E))$, we denote by $(h)_E\in \sum\Z E_i$ the exceptional part of the divisor $\di_X(h)$; 
so, $\di_X(h)-(h)_E$ is an effective divisor containing no components of $E$. 
We simply write $(h)_E$ instead of $(h\circ \pi)_E$ for $h\in \m$.

An element of $\sum \Q E_i:=(\sum \Z E_i)\otimes \Q$ is called a {\em $\Q$-cycle}.
A $\Q$-cycle $D$ is said to be {\em nef} (resp. {\em anti-nef}) if $DE_i\ge 0$ (resp. $DE_i\le 0$) for all $i\in \cI$.
Note that if $D\ne 0$ is anti-nef, then $D\ge E$.

\begin{defn}
The {\em maximal ideal cycle} on $X$ is the minimum of $\defset{(h)_E}{h \in \m}$ and denoted by $M_X$.
There exists a $\Q$-cycle $Z_{K_X}$ such that $(K_X+Z_{K_X})E_i=0$ for every $i\in \cI$, where $K_X$ is a canonical divisor on $X$.
We call $Z_{K_X}$ the {\em canonical cycle} on $X$. 
\end{defn}

In the following, we assume that $Z>0$ is a cycle such that $\cO_X(-Z)$ {\em has no fixed component}, namely, there exists a function $h\in H^0(\cO_X(-Z))$ such that $(h)_E=Z$.
We say that $\cO_X(-Z)$ is {\em generated} if it is $\pi$-generated (i,e., $\pi^*\pi_*\cO_X(-Z)\to \cO_X(-Z)$ is surjective).
For any coherent sheaf $\cF$ on $X$, we write $H^i(\cF)=H^i(X,\cF)$ and $h^i(\cF)=\dim_{\C}(H^i(\cF))$.

\begin{defn}
The {\em geometric genus} of the singularity $\V$ is defined by $p_g\V=h^1(\cO_X)$.
\end{defn}

\begin{defn}
Let $A\ge 0$ be an effective cycle on $X$ and let
\[
h(A)=\max  \defset{h^1(\cO_{B})}{B \in \sum \Z E_i, \; B\ge 0, \; \supp (B)\subset \supp (A)}.
\]
We put $h^1(\cO_{B})=0$ if $B=0$.
There exists a unique minimal cycle $C$ such that $h^1(\cO_C)=h(A)$
(cf. \cite[4.8]{chap}). 
We call $C$ the {\em cohomological cycle} of $A$.
Note that $p_g\V=h(E)$ and that if $\V$ is Gorenstein and $\pi$ is the minimal resolution, then $Z_{K_X}$ is the cohomological cycle of $E$ (\cite[4.20]{chap}).

We define a reduced cycle $A^{\bot}$ to be the sum of the components $E_i\subset E$ such that $AE_i=0$.
\end{defn}

\begin{rem}\label{r:h1bot}
Let $F_1, \dots, F_k$ be the connected component of $Z^{\bot}$ and let $(V_i, p_i)$ be the normal surface singularity obtained by contracting $F_i$.
If $C$ is the cohomological cycle of $Z^{\bot}$, we have
\[
h^1(\cO_C)=\sum_{i=1}^kp_g(V_i,p_i).
\]
\end{rem}

\begin{defn}
Let $q(Z)=h^1(\cO_X(-Z))$ and $q_Z(n)=h^1(\cO_X(-nZ))$ for $n \ge 0$.
Let $s(Z)=\min\defset{n \in \Z_{\ge 0}}{q_Z(n)=q_Z(n+1)}$.
\end{defn}

\begin{prop}[See {\cite[\S3]{OWYrees}, \cite[3.6]{o.h-ell}}]\label{p:nZ}
We have the following. 
\begin{enumerate}
\item $q_Z(n) \ge q_Z(n+1)$ for every integer $n \ge 0$.

\item If $q_Z(1)=p_g\V$, namely, $s(Z)=0$, 
then  $q(n)=p_g\V$ for $n\ge 0$.
\item If $\cO_X(-Z)$ is generated, then 
$q_Z(n)=q_Z(s(Z))=h^1(\cO_C)$ for $n \ge s(Z)$, where $C$ is the cohomological cycle of $Z^{\bot}$. 
\item $\cO_X(-nZ)$ is generated for $n>s(Z)$.
\end{enumerate}
\end{prop}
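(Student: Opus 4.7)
Fix $h \in H^0(\cO_X(-Z))$ with $(h)_E = Z$, and write $\di_X(h) = Z + D$, where $D$ is effective and contains no exceptional component. Multiplication by $h$ identifies $\cO_X(-nZ)$ with $\cO_X(-(n+1)Z-D) \subset \cO_X(-(n+1)Z)$, producing the key short exact sequence
\begin{equation*}
0 \to \cO_X(-nZ) \xrightarrow{\cdot h} \cO_X(-(n+1)Z) \to \cO_D(-(n+1)Z) \to 0.
\end{equation*}
Since $D$ has no exceptional component, $\pi|_D$ is finite onto a one-dimensional Stein subvariety of $V$, and hence $H^1(D, \cG) = 0$ for every coherent sheaf $\cG$ on $D$. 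All four assertions are to be read off from this single sequence.

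Part (1) is immediate: the long exact sequence of cohomology ends in $H^1(\cO_X(-nZ)) \twoheadrightarrow H^1(\cO_X(-(n+1)Z)) \to 0$, giving $q_Z(n) \ge q_Z(n+1)$.

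For Part (2), let $r_{n+1}\colon H^0(\cO_X(-(n+1)Z)) \to H^0(\cO_D(-(n+1)Z))$ be the restriction; the same long exact sequence yields $q_Z(n) - q_Z(n+1) = \dim \Coker(r_{n+1})$, and $s(Z) = 0$ says precisely that $r_1$ is surjective. I would then prove by induction on $n$ that $r_{n+1}$ is surjective for every $n \ge 0$, with the inductive step comparing the sequences at consecutive levels via multiplication by $h$. Because $h|_D = 0$, multiplication by $h$ does not directly produce liftings on $D$; instead one must combine the vanishing of the connecting map at level $n-1$ with a compatibility between the extension classes of the two sequences in order to force the new connecting map at level $n$ to vanish.

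For Parts (3) and (4), assume $\cO_X(-Z)$ is $\pi$-generated. Then $-Z \cdot E_i \ge 0$, with equality iff $E_i \subset Z^{\bot}$. Let $\varphi\colon X \to X'$ be the analytic contraction of $Z^{\bot}$, with resulting normal singularities $(V_i, p_i)$ at the contracted points. For $n$ sufficiently large, $\cO_X(-nZ)$ descends to a $\varphi_*$-acyclic sheaf on $X'$, so that \remref{r:h1bot} gives $q_Z(n) = \sum_i p_g(V_i, p_i) = h^1(\cO_C)$, where $C$ is the cohomological cycle of $Z^{\bot}$. Part (1) combined with the definition of $s(Z)$ then shows that this stabilization begins already at $n = s(Z)$, proving Part (3). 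For Part (4), once $H^1$ has stabilized at $n = s(Z)$ the $\pi$-generation of $\cO_X(-Z)$ permits a Castelnuovo--Mumford style bootstrap that yields $\pi$-generation of $\cO_X(-nZ)$ for $n > s(Z)$. The principal obstacle is Part (2): propagating the surjectivity of $r_1$ along the entire tower of these sequences requires a fine analysis of how their extension classes are related under multiplication by $h$, and is where I expect the argument to need the most care.
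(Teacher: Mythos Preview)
The paper does not prove this proposition in the text; it is stated with the citation ``See [OWYrees, \S3], [o.h-ell, 3.6]'' and no argument follows. So there is no in-paper proof to compare against, and I will assess your proposal on its own.

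Your treatment of Part~(1) via the sequence
\[
0 \to \cO_X(-nZ) \xrightarrow{\cdot h} \cO_X(-(n+1)Z) \to \cO_D(-(n+1)Z) \to 0
\]
is correct and is the standard route.

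The real gap is Part~(2), and you yourself flag it. What you must show is that surjectivity of $r_1$ forces surjectivity of every $r_{n+1}$; your appeal to a ``compatibility between the extension classes'' is not an argument. The missing idea is concrete and elementary: $D$ is Stein (it is finite over the Stein curve $\pi(D)$), so Cartan's Theorems A and B apply. In particular, the multiplication map
\[
H^0(\cO_D(-Z)) \otimes_{H^0(\cO_D)} H^0(\cO_D(-nZ)) \longrightarrow H^0\bigl(\cO_D(-(n+1)Z)\bigr)
\]
is onto (choose finitely many global generators of one factor, tensor, and use exactness of $H^0$ on Stein spaces). Hence any $\sigma\in H^0(\cO_D(-(n+1)Z))$ is a finite sum $\sum_j \tau_j\rho_j$ with $\tau_j\in H^0(\cO_D(-Z))$ and $\rho_j\in H^0(\cO_D(-nZ))$; by the base case and the inductive hypothesis each factor lifts to $X$, and the product of the lifts is a preimage of $\sigma$ under $r_{n+1}$. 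No extension-class analysis is needed.

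A second gap hides in Part~(3). You write that ``Part~(1) combined with the definition of $s(Z)$ then shows that this stabilization begins already at $n=s(Z)$'', but Part~(1) only says $q_Z$ is non-increasing; a non-increasing sequence can stall once and then drop again (e.g.\ $5,4,4,3,3,\dots$). What is actually used here is the hypothesis that $\cO_X(-Z)$ is generated: two general sections $h_1,h_2$ then give the short exact Koszul sequence the paper writes down in the proof of \proref{p:qr}, whose long exact sequence yields the concavity $\Delta(n-1)\ge\Delta(n)$ with $\Delta(n)=q_Z(n)-q_Z(n+1)$. Concavity together with $\Delta(n)\ge 0$ forces $\Delta(n)=0$ for all $n\ge s(Z)$. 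Finally, in your sketch of Part~(4) you invoke ``the $\pi$-generation of $\cO_X(-Z)$'', but Part~(4) is stated only under the standing ``no fixed component'' hypothesis; the generatedness of $\cO_X(-nZ)$ for $n>s(Z)$ is the conclusion, not an input, and must be deduced from the surjectivity of $r_{n}$ together with the Stein property of $D$.
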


We are interested in the range of the function $q$.
Let $\cA$ (resp. $\cA'$) denotes the set of the pairs $(Y,W)$ such that $W>0$ is a cycle on a resolution $Y\to V$ such that $\cO_Y(-W)$ is generated (resp. has no fixed components).
Clearly, $\cA\subset \cA'$.
Let 
\[
q(\cA)=\defset{h^1(\cO_Y(-W))}{(Y,W)\in \cA}, \ \
q(\cA')=\defset{h^1(\cO_Y(-W))}{(Y,W)\in \cA'}.
\]
By \proref{p:nZ}, we have 
\[
q(\cA) \subset q(\cA') \subset\{0,1,\dots, p_g\V\}.
\]
The proof of the following theorem is included in the proof of \cite[3.12]{o.h-ell}.

\begin{prop}\label{p:imgq}
We have the equality
\[
q(\cA')=\{0,1,\dots, p_g\V\}.
\]
\end{prop}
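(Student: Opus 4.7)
The inclusion $q(\cA')\subset\{0,1,\dots,p_g(V,p)\}$ is Proposition~\ref{p:nZ}, so what remains is to exhibit, for every integer $k$ in that range, a pair $(Y,W)\in\cA'$ with $h^1(\mathcal{O}_Y(-W))=k$. The plan is to reduce the problem to a statement about cohomological cycles via Proposition~\ref{p:nZ}(3): given a resolution $Y\to V$ and an anti-nef cycle $Z$ on $Y$ with $\mathcal{O}_Y(-Z)$ generated, one has $h^1(\mathcal{O}_Y(-nZ))=h^1(\mathcal{O}_C)$ for $n\gg 0$, where $C$ is the cohomological cycle of $Z^\bot$. By Remark~\ref{r:h1bot} this last number equals the sum of the geometric genera of the normal surface singularities obtained by contracting the connected components of $T:=Z^\bot$. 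Hence producing a pair with $h^1(\mathcal{O}_Y(-W))=k$ reduces to finding a resolution $Y\to V$ and a subset $T$ of its exceptional components with $\sum_{i} p_g(V_i,p_i)=k$, together with an anti-nef cycle on $Y$ whose orthogonal set is exactly $T$.

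The extreme values I would handle directly. For $k=0$, take any strictly anti-nef cycle $Z$ on a resolution of $V$ with $\mathcal{O}_Y(-Z)$ generated, so that $T=\emptyset$ and $C=0$. For $k=p_g(V,p)$, I would blow up the minimal resolution at a sequence of carefully chosen points of the exceptional set so as to isolate a subconfiguration $T$ whose partial contraction reproduces the full geometric genus of $(V,p)$. For the intermediate values I propose an inductive construction: starting from the $T$ realizing $p_g(V,p)$, I would remove or modify one component of $T$ at a time, blowing up further if needed, so that $h^1(\mathcal{O}_C)$ drops by exactly one at each step. The basic unit-decrement is controlled by the short exact sequence
\[
0\to \mathcal{O}_{E_j}(-B)\to \mathcal{O}_{B+E_j}\to \mathcal{O}_B\to 0,
\]
which, for a rational component $E_j$ with suitably small intersection number $B\cdot E_j$, shifts $h^1(\mathcal{O}_B)$ by at most one.

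For every subset $T$ arising in this construction, the required anti-nef cycle $Z$ with $Z^\bot=T$ exists by a standard linear-algebra argument using the dual basis of the (negative definite) intersection form on the exceptional components: write $Z$ as a positive rational combination of the anti-dual cycles of the components not in $T$, and clear denominators. Proposition~\ref{p:nZ}(4) then guarantees that $\mathcal{O}_Y(-nZ)$ is generated for $n\gg 0$, so $(Y,nZ)\in\cA'$ and $h^1(\mathcal{O}_Y(-nZ))=k$. The main obstacle is the interpolation step: verifying that every potentially large jump in $\sum p_g(V_i,p_i)$ arising from removing a whole connected component of $T$ can, after enough auxiliary blow-ups, be broken into unit decrements. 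This is the heart of the argument and, as the excerpt notes, mirrors the cohomological/combinatorial analysis in the proof of \cite[Theorem~3.12]{o.h-ell}.
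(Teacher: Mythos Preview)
Your strategy collapses the distinction between $\cA$ and $\cA'$, and in doing so would prove Conjecture~\ref{c:q}, not just Proposition~\ref{p:imgq}. Indeed, once you have an anti-nef $Z$ with $Z^{\bot}=T$, a large multiple $mZ$ has $\cO_Y(-mZ)$ \emph{generated} (this is the Artin--Zariski contractibility argument; incidentally, your appeal to Proposition~\ref{p:nZ}(4) is illegitimate here since that statement is made under the standing hypothesis that $\cO_Y(-Z)$ already has no fixed component). So every pair you produce actually lies in $\cA$. Since the paper states explicitly that $q(\cA)=q(\cA')$ is open in general (and cites \cite{nagy.h1} on ``holes'' in such value sets), your outline cannot be correct as a proof of the proposition.

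The failure is precisely at the interpolation step. Your reduction transforms the problem into: for every $k\le p_g\V$, find a resolution $Y$ and a proper reduced subcycle $T\subsetneq E(Y)$ with $h(T)=k$. But this is essentially a reformulation of Conjecture~\ref{c:q}; the short exact sequence you write down only controls the change in $h^1(\cO_B)$ under adding a single rational component, and gives no mechanism for realizing an \emph{arbitrary} intermediate value of $\sum_i p_g(V_i,p_i)$ by deleting components or blowing up. The ``unit decrement'' you propose is exactly the step that is not known to be achievable with generated cycles. The actual argument in \cite[3.12]{o.h-ell} does not proceed via $Z^{\bot}$ and partial contractions at all; it exploits the extra flexibility of the ``no fixed component'' condition directly, producing cycles on a single resolution whose $h^1$ moves through every value---a construction that does \emph{not} yield generated sheaves in general, which is why it proves the proposition but not the conjecture.
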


\begin{conj}\label{c:q}
For every normal complex surface singularity, 
the equality $q(\cA) = q(\cA')$ holds.
\end{conj}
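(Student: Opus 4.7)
The plan is to reduce the conjecture to a realisation problem and then attempt a construction via \proref{p:nZ}. Since $q(\cA)\subset q(\cA')$ is automatic and \proref{p:imgq} gives $q(\cA')=\{0,1,\dots,p_g(V,p)\}$, the conjecture is equivalent to asserting that every $k$ in this range is realised as $h^1(\cO_Y(-W))$ for some generated pair $(Y,W)\in\cA$.

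The main tool is that $\cA$ is richer than it first appears: for any $(Y_0,W_0)\in\cA'$ and $n>s(W_0)$, \proref{p:nZ}(4) yields $(Y_0,nW_0)\in\cA$, and by \proref{p:nZ}(3) together with \remref{r:h1bot},
\[
h^1(\cO_{Y_0}(-nW_0))=h^1(\cO_C)=\sum_{i=1}^{\ell} p_g(V_i,p_i),
\]
where $(V_i,p_i)$ are the singularities obtained by contracting the connected components of $W_0^\perp$. Hence $q(\cA)$ contains every partial sum $\sum_i p_g(V_i,p_i)$ realised by the perpendicular subconfiguration of some cycle in $\cA'$, and the conjecture reduces to showing that these partial sums, as $(Y_0,W_0)$ and the resolution vary, already cover $\{0,1,\dots,p_g(V,p)\}$.

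The construction strategy for each target $k$ proceeds as follows. For $k=0$, any strictly anti-nef cycle suffices since then $W_0^\perp=\emptyset$. For intermediate $k$, one seeks a sub-configuration $\Gamma$ of exceptional components whose contracted sub-singularities have total geometric genus $k$. Since the intersection form on the exceptional set is negative definite, there is always a positive $\Q$-cycle orthogonal to a prescribed $\Gamma$ and strictly negative on its complement. One clears denominators to obtain an integral anti-nef $W_0$ with $W_0^\perp\supset\Gamma$, giving $(Y_0,W_0)\in\cA'$ and, via iteration, $(Y_0,nW_0)\in\cA$ with the prescribed $h^1$. For the top value $k=p_g(V,p)$, one would blow up the minimal resolution at a point of $E$ to gain the combinatorial flexibility of finding a sub-configuration whose contraction reproduces the same geometric genus as $(V,p)$, then apply the same orthogonalisation procedure on the enlarged resolution.

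The main obstacle is that this strategy requires simultaneously (i) integrality of $W_0$ after clearing denominators, (ii) the no-fixed-component condition, i.e.\ $W_0=(h)_E$ for some $h\in\m$, and (iii) realisability of every $k$ as a sum $\sum p_g(V_i,p_i)$ over some sub-configuration on some resolution. The first two are subtle---the $\Q$-cycle produced by negative definiteness need not correspond to the exceptional divisor of any function---and the third is a delicate combinatorial-topological problem asking whether the partial sums of geometric genera of sub-singularities of $(V,p)$ cover the full interval $\{0,\dots,p_g(V,p)\}$. Arranging this matching uniformly in $k$ and in the topology of $(V,p)$ is where the conjecture's difficulty concentrates, and it is conceivable that for some topological types one has to combine the stable-value construction above with an initial blow-up that produces just the right additional rational components to fill in missing values.
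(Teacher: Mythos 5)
This statement is Conjecture~\ref{c:q}, which the paper leaves \emph{open}: the author explicitly remarks that the equality $q(\cA)=q(\cA')$ is known only in a few cases (elliptic singularities via Proposition~\ref{p:ellA}, and Example~\ref{nr<br}), so there is no proof in the paper to compare yours against. Your text is in any case a strategy sketch rather than a proof, and you flag several unresolved points yourself; but the central step is not merely unfinished --- it is aimed at a mechanism that is provably too weak.

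Concretely, you assert that the conjecture ``reduces to'' showing that the sums $\sum_i p_g(V_i,p_i)$ over the contracted connected components of $W_0^{\bot}$, as $(Y_0,W_0)$ varies, cover $\{0,\dots,p_g\V\}$. That covering would be sufficient, not equivalent, and it fails already in the paper's own worked example. In Example~\ref{nr<br} (the cone over a hyperelliptic curve of genus $g$, with $p_g\V=g$), the cycle $Z=E_0+2(E_3+\cdots+E_{2g})$ satisfies $ZE_i<0$ for every component (one computes $ZE_0=-2$ and $ZE_i=-1$ for $i\ge 3$), so $Z^{\bot}=\emptyset$ and your stable-value mechanism yields only the value $0$; more generally, on the minimal resolution every nonzero anti-nef cycle is strictly negative on the unique irreducible exceptional curve, and after blow-ups a perpendicular configuration either consists of rational curves (total $p_g=0$) or contains the positive-genus curve. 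The intermediate values $g-1,\dots,1$ are realized in that example not as stable values of perpendicular configurations but as the intermediate terms $q_Z(1),\dots,q_Z(g-1)$ of the non-increasing sequence $q_Z(n)$ attached to a single generated $Z$ (note that $\cO_X(-nZ)$ is generated for all $n\ge 1$ once $\cO_X(-Z)$ is, so each $q_Z(n)$ lies in $q(\cA)$); the entire difficulty of the conjecture is that this sequence can in principle skip values, which is exactly the ``holes'' phenomenon studied in the cited work of Nagy. In addition, as you yourself note, the positive $\Q$-cycle orthogonal to a prescribed $\Gamma$ produced by negative definiteness need not equal $(h)_E$ for any $h\in\m$, so membership in $\cA'$ is not guaranteed, and integrality after clearing denominators does not repair this. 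The proposal therefore does not establish the statement, which remains an open problem.
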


At present, we have the equality $q(\cA) = q(\cA')$ only for a few cases (cf. \proref{p:ellA}, \exref{nr<br}).
Some results related to \conjref{c:q} are obtained in \cite{nagy.h1}.

The next lemma is used in \sref{s:BCI}.
For a $\Q$-cycle $D$, let $\cO_X(D)=\cO_X(\fl{D})$, where $\fl{D}$ denotes the integral part of $D$.
\begin{lem}\label{l:filt}
Let $C<E$ be a reduced cycle and $\{I_n\}_{n\in \Z_{\ge 0}}$ a filtration of $\cO_{V,p}$ such that $(h)_E\ge nC$ for all $n\in \Z_{\ge 0}$ and all $h\in I_n\setminus\{0\}$ and that $\bigoplus_{n\ge 0} I_n/ I_{n+1}$ is reduced. 
Assume that there exists an anti-nef $\Q$-cycle $\t C=\sum a_iE_i$ such that $a_i=1$ for $E_i\le C$ and $\t CE_i=0$ for every $E_i\not\le C$.
Moreover assume that there exists an integer $d>0$ such that $d\t C\in \sum \Z E_i$ and $(h)_E=d\t C$ for some $h\in I_d$.
Then $I_n=\t I_n:=\pi_*\cO_X(-n\t C)_p$.
\end{lem}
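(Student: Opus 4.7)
The plan is to prove the two inclusions $I_n \subseteq \t I_n$ and $\t I_n \subseteq I_n$ separately. The first is a geometric argument with anti-nef cycles, and the second exploits the reducedness of $\gr_I$ together with the existence of the saturating element $h$.

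First I would prove $I_n \subseteq \t I_n$. Take $f \in I_n$, so by hypothesis $(f)_E \ge nC$, and since $f$ is regular on $V$, the exceptional divisor $(f)_E$ is anti-nef (because $\di_X(f)\cdot E_j=0$ and the non-exceptional part of $\di_X(f)$ is effective, meeting each $E_j$ nonnegatively). Consider the $\Q$-cycle $D=(f)_E-n\t C$, and split $D=D'+D''$, where $D'$ is supported on $\supp(C)$ and $D''$ on the remaining exceptional components. Because $\t C$ has coefficient $1$ on each $E_i\le C$ matching $C$ and $(f)_E\ge nC$, the piece $D'$ is effective. For $D''$: $D\cdot E_j\le 0$ for every $E_j\not\le C$ (from anti-nef of $(f)_E$ and $\t C\cdot E_j=0$), while $D'\cdot E_j\ge 0$ (since $D'\ge 0$ is supported on $C$ and $E_j\not\le C$); hence $D''\cdot E_j\le 0$ for every $E_j$ in its support. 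Since the intersection form on the off-$C$ components is negative definite, Artin's standard argument forces $D''\ge 0$, and thus $D\ge 0$, yielding $f\in\t I_n$.

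For the reverse inclusion $\t I_n\subseteq I_n$, I would first show each $I_n$ is integrally closed. Given a relation $f^k+c_1f^{k-1}+\cdots+c_k=0$ with $c_i\in I_n^i\subseteq I_{ni}$, suppose $f\in I_m\setminus I_{m+1}$ with $m<n$. Reducedness of $\gr_I$ then gives $f^k\in I_{mk}\setminus I_{mk+1}$, while each $c_if^{k-i}\in I_{ni+m(k-i)}=I_{mk+(n-m)i}\subseteq I_{mk+1}$, contradicting the equation. Similarly, $\t I_n$ is integrally closed, being an intersection of divisorial valuation ideals $\{f:\ord_{E_i}(f)\ge na_i\}$. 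Next I would observe that $h\notin I_{d+1}$ (else $(h)_E=d\t C$ would have to dominate $(d+1)C$, impossible on $\supp(C)$), so by reducedness $h^k\in I_{kd}\setminus I_{kd+1}$ for every $k\ge 1$. To place an arbitrary $f\in\t I_n$ inside $I_n$, I would argue that $f$ is integral over $I_n$: from $(f^d)_E\ge nd\t C=(h^n)_E$ and the fact that $h^n$ saturates $\t I_{nd}$ in a controlled way, one should produce a monic equation $f^k+a_1f^{k-1}+\cdots+a_k=0$ with $a_i\in I_n^i$; together with the integral closedness of $I_n$ this yields $f\in I_n$.

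The main obstacle is this last step of the second inclusion. The reducedness of $\gr_I$ and the single element $h$ have to be leveraged to force the divisorially defined ideals $\t I_n$ to lie in $I_n$; phrasing this as an integrality relation requires a careful Rees-algebra or valuation-theoretic comparison between the polynomial-growth data coming from reducedness and the geometric data encoded by $\t C$. A plausible route is to show that the Rees algebra $\bigoplus\t I_n$ is integral over $\bigoplus I_n$, with $ht^d$ functioning as a minimal reduction in the graded sense and the reducedness of $\gr_I$ ensuring that integrality cannot strictly extend $I_n$.
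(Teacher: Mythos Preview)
Your argument for $I_n\subseteq\t I_n$ is correct and essentially matches the paper: there one writes $(h)_E-n\t C=\Delta_1-\Delta_2$ as a difference of effective cycles without common components, observes that $\supp(\Delta_2)\subset\supp(E-C)$ (whence $\t C\Delta_2=0$), and derives the contradiction $0<-\Delta_2^2\le(h)_E\Delta_2\le 0$ if $\Delta_2\ne 0$. Your support decomposition $D=D'+D''$ is a minor variant of the same negative-definiteness idea.

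The reverse inclusion, however, is genuinely incomplete. You correctly prove that each $I_n$ is integrally closed, but you never establish that an arbitrary $f\in\t I_n$ is integral over $I_n$; the phrases ``one should produce a monic equation'' and ``a plausible route is\dots'' are aspirations, not arguments. The route you sketch is also problematic: the single element $h$ cannot serve as a reduction of $\t I_{nd}$ (in a normal domain a principal ideal is its own integral closure), and there is no evident mechanism for passing from $(f^d)_E\ge (h^n)_E$ to a monic relation with coefficients in the \emph{powers} $I_n^i$ rather than merely in the filtration levels $I_{ni}$.

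The paper bypasses this entirely by invoking a structure theorem of Tomari--Watanabe (\S\S2.2--2.4 of \cite{tki-w}): reducedness of $\bigoplus_{n\ge 0} I_n/I_{n+1}$ forces the filtration itself to be divisorial, i.e., there exists a $\Q$-cycle $D>0$ with $I_n=\pi_*\cO_X(-nD)_p$ for every $n$, and one may take $dD$ integral with $\cO_X(-dD)$ generated. Once this is known, the already-proved inclusion $I_d\subset\t I_d$ yields $dD\ge d\t C$, while the hypothesis $h\in I_d$ with $(h)_E=d\t C$ yields $d\t C=(h)_E\ge dD$; hence $D=\t C$ and the two filtrations coincide. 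This structural input is exactly what your attempted integrality argument is missing.
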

\begin{proof}
First we show that $I_n\subset \t I_n$ for every $n\ge 0$.
Let $h\in I_n$ and $\Delta=(h)_E-n\t C$. We write $\Delta=\Delta_1-\Delta_2$, where $\Delta_1$ and $\Delta_2$ are effective and have no common components. Since $(h)_E\ge n C$, by the assumption on $\t C$, we have $\supp(\Delta_2) \subset \supp(\t C-C) = \supp(E-C)$, and hence $\t C\Delta_2=0$.
If $\Delta_2\ne 0$, then $0<-\Delta_2^2\le \Delta\Delta_2=(h)_E\Delta_2$; it contradicts that $(h)_E$ is anti-nef.
Hence $\Delta=\Delta_1\ge 0$, namely, $h\in \t I_n$.

From the arguments in \S 2.2--2.4 of \cite{tki-w}, since  $\bigoplus_{n\ge 0} I_n/ I_{n+1}$ is reduced, we have a $\Q$-cycle $D>0$ such that $I_n=\pi_*\cO_X(-nD)_p$ for all $n\in \Z_{\ge 0}$,  and we may assume that $dD\in \sum\Z E_i$ and $\cO_X(-dD)$ is generated.
The inclusion $I_d\subset \t I_d$ implies that $dD \ge d\t C$. 
Since there exists $h\in I_d$ such that $d\t C=(h)_E\ge dD$, we obtain  $\t C=D$. 
\end{proof}

\section{Cohomology and normal reduction numbers}
\label{s:h-nr}

Let $\m \subset \cO_{V,p}$ denote the maximal ideal.
In the following, we always assume that $I \subset \cO_{V,p}$ is an $\m$-primary integrally closed ideal, namely,  $I$ satisfies that $\sqrt{I}=\m$ and $\bar I = I$.
Let $Q$ be a minimal reduction of $I$.
Then there exist a resolution $\pi\: X\to V$ 
and a cycle $Z>0$ such that
\[
I=I_Z:=\pi_*\cO_X(-Z)_p
\]
and  $I\cO_X=\cO_X(-Z)$ (cf. \cite[\S 6]{Li}).
In this case, we say that $I$ is {\em represented} by a cycle $Z$ on $X$.  We use the symbol ``$I_Z$'' only when $\cO_X(-Z)$ is generated.
Conversely, such an ideal ${I_Z}$ is  $\m$-primary and integrally closed.
Note that $\ol{I_ZI_{Z'}}=I_{Z+Z'}$.
Thus we can write
\begin{align*}
\nr(I_Z)&=\min\defset{n\in \Z_{>0}}{I_{(n+1)Z}=QI_{nZ}}, \\ 
 \brr(I_Z)&=\min\defset{n\in \Z_{>0}}{I_{(m+1)Z}=QI_{mZ}, \; m \ge n}.
\end{align*}

In the rest of this section, we always assume that $I$ is represented by a cycle $Z$ on $X$, namely, $I=I_Z$.

\begin{defn}
We put $q(I)=q(Z)=h^1(\cO_X(-Z))$; this is independent of the representation of $I$ (cf. \cite[Lemma 3.4]{OWYgood}). 
\end{defn}

\begin{prop}[Cf. {\cite[\S2]{OWYrees}}]\label{p:qr}
Let $q_I(n):=q(\ol{I^n})=q_Z(n)$ for $n \ge 0$.
We have the following.
\begin{enumerate}
\item 
For any integer $n \ge 1$, we have 
\[
2  q_I(n) + \dim_{\C}(\overline{I^{n+1}}/Q\overline{I^n})
=q_I(n+1)+q_I(n-1).  
\]
In particular, 
\[
\nr(I) = \min\defset{n \in \bbZ_{\ge 0}}
{q_I(n-1)-q_I(n)=q_I(n)-q_I(n+1)}.
\]
\item We have
\[
\brr(I) = \min\defset{n \in \bbZ_{\ge 0}}{q_I(n-1)=q_I(n)}.
\]
In particular, $\brr(I)=s(Z)+1 \le p_g\V+1$ and $q_I(n) = q_I(s(Z))$ for every $n\ge s(Z)$.
\end{enumerate}
\end{prop}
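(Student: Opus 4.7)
The plan is to derive both parts from a single Koszul-type exact sequence on $X$. Choosing the minimal reduction $Q=(a,b)$ with $a,b$ sufficiently general in $I$, I would first verify that $Q\cO_X=I\cO_X=\cO_X(-Z)$, so that $a,b\in H^0(\cO_X(-Z))$ globally generate $\cO_X(-Z)$ and form a regular sequence at their common zero locus. The Koszul resolution then produces, for each $n\ge 1$, a short exact sequence of sheaves
\[
0 \to \cO_X(-(n-1)Z) \xrightarrow{(b,\,-a)} \cO_X(-nZ)^{\oplus 2} \xrightarrow{(a,b)} \cO_X(-(n+1)Z) \to 0.
\]

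Applying the long exact sequence of cohomology, I would use that $V$ is Stein together with $R^i\pi_*=0$ for $i\ge 2$ to conclude $H^2(\cO_X(-mZ))=0$ for all $m\ge 0$. Under the identification $H^0(\cO_X(-mZ))=\overline{I^m}$ via $\pi_*$ on a small enough Stein representative of $\V$, the image of the $(a,b)$-map on global sections is precisely $Q\overline{I^n}$, so the long exact sequence collapses to the four-term sequence
\[
0 \to \overline{I^{n+1}}/Q\overline{I^n} \to H^1(\cO_X(-(n-1)Z)) \to H^1(\cO_X(-nZ))^{\oplus 2} \to H^1(\cO_X(-(n+1)Z)) \to 0.
\]
Taking the alternating sum of dimensions yields the formula in part (1), and the characterization of $\nr(I)$ is then immediate upon setting $\dim_\C(\overline{I^{n+1}}/Q\overline{I^n})=0$.

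For part (2), the key input is the monotonicity $q_I(n-1)\ge q_I(n)$ from \proref{p:nZ}(1). If $q_I(n-1)=q_I(n)$, then the formula from (1) becomes $\dim_\C(\overline{I^{n+1}}/Q\overline{I^n})=q_I(n+1)-q_I(n)$; the left side is nonnegative while the right side is nonpositive, so both vanish. Thus $q_I(n)=q_I(n+1)$ and $\overline{I^{n+1}}=Q\overline{I^n}$ hold simultaneously, and iterating propagates these equalities to all $m\ge n$, so $\brr(I)\le\min\{n:q_I(n-1)=q_I(n)\}$. Conversely, if $\overline{I^{m+1}}=Q\overline{I^m}$ for every $m\ge n$, then (1) forces the first differences $q_I(m-1)-q_I(m)$ to be constant for $m\ge n$, hence zero since $q_I$ is bounded below by $0$, whence $q_I(n-1)=q_I(n)$. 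The identity $\brr(I)=s(Z)+1$ follows directly from the definition of $s(Z)$, and the bound $\brr(I)\le p_g\V+1$ follows by pigeonhole applied to the non-increasing integer sequence $q_I(0),q_I(1),\dots$ contained in $\{0,1,\dots,p_g\V\}$ with $q_I(0)=p_g\V$. The main technical subtlety will be justifying the Koszul exactness for a sufficiently general $Q$ (in particular the equality $Q\cO_X=\cO_X(-Z)$); the rest is routine homological bookkeeping.
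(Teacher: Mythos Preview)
Your proposal is correct and follows essentially the same approach as the paper: both set up the Koszul-type short exact sequence for general generators of $Q$, take cohomology to obtain the four-term (equivalently five-term) exact sequence, and read off part~(1) from the alternating sum; part~(2) is then deduced from the non-increasing behavior of $q_I$ together with the non-negativity of the differences $\Delta_I(n)=q_I(n)-q_I(n+1)$. Your write-up is in fact more explicit than the paper's in several places---you justify $H^2=0$ via Steinness and $R^{\ge 2}\pi_*=0$, and you spell out the converse inequality $\brr(I)\ge\min\{n:q_I(n-1)=q_I(n)\}$ by the ``constant first differences must be zero'' argument, whereas the paper compresses this into the observation that $\Delta_I$ is non-negative and non-increasing and hence stabilizes at~$0$ once it first vanishes.
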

\begin{proof}
We write $H^i(Z):=H^i(\cO_X(-Z))$. 
Let $h_1, h_2\in H^0(Z)$ and  
$Q:=(h_1, h_2)\subset \cO_{V,p}$.
Suppose that $h_1, h_2$ are sufficiently general so that $Q$ is a minimal reduction of $I=I_Z$ and that the following sequence is exact:
\[
0 \to \mathcal{O}_X(-(n-1)Z)
\xrightarrow{(h_1\; h_2)} 
{\mathcal{O}_X(-nZ)^{\oplus 2}
\xrightarrow{\binom{-h_2}{h_1}}
\mathcal{O}_X(-(n+1)Z)} \to 0.
\]
Taking cohomology, we obtain the long exact sequence:
\[
0 \to \ol{I^n}Q \to \ol{I^{n+1}}\to H^1((n-1)Z) \to H^1(nZ)^{\oplus 2} \to H^1((n+1)Z) \to0.  
\]
This yields (1).  We write 
\[
\dim_{\C} (\ol{I^{n+1}}/Q\ol{I^n} )=\Delta_I(n-1)-\Delta_I(n) \ge 0,
\]
where ${\Delta_I(n)}=q_I(n)-q_I(n+1)$. By \proref{p:nZ} (1), $\Delta_I(n) \ge 0$.
Therefore, if $\Delta_I(n-1)=0$, then $\Delta_I(n+k)=0$ for $k\ge 0$.
Hence we have (2).
\end{proof}

By the argument similar to the proof of \proref{p:qr}, we have
\begin{prop}[{\cite[2.9]{OWYnrbr}}]
Let $r= \nr(I)$. Then
\[
r(r-1)/2 + q(r) \le p_g\V.
\]
\end{prop}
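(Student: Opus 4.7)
The plan is to exploit the second-difference identity in \proref{p:qr}(1), which forces the first differences of $q_I$ to form a monotone sequence of non-negative integers; the strictness of that monotonicity up to index $r-1$ is then built into the definition of $\nr(I)$.

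First I would set $\Delta_I(n):=q_I(n)-q_I(n+1)$ for $n\ge 0$. \proref{p:nZ}(1) gives $\Delta_I(n)\ge 0$, and rewriting the identity in \proref{p:qr}(1) as
\[
\dim_{\C}(\overline{I^{n+1}}/Q\overline{I^n})=\Delta_I(n-1)-\Delta_I(n)\ge 0
\]
shows that $\{\Delta_I(n)\}_{n\ge 0}$ is a non-increasing sequence of non-negative integers. By the characterization of $\nr(I)$ in \proref{p:qr}(1), $r=\nr(I)$ is the smallest $n\ge 0$ with $\Delta_I(n-1)=\Delta_I(n)$; hence for $1\le n\le r-1$ the inequality $\Delta_I(n-1)>\Delta_I(n)$ is strict.

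Next I would extract the key numerical consequence. Since the finite sequence $\Delta_I(0)>\Delta_I(1)>\cdots>\Delta_I(r-1)\ge 0$ consists of strictly decreasing non-negative integers, induction from the top gives the lower bound
\[
\Delta_I(k)\ge r-1-k\qquad (0\le k\le r-1).
\]
Summing these inequalities and telescoping,
\[
p_g(V,p)-q(r)=q_I(0)-q_I(r)=\sum_{k=0}^{r-1}\Delta_I(k)\ge\sum_{k=0}^{r-1}(r-1-k)=\frac{r(r-1)}{2},
\]
which is exactly the claimed inequality. The degenerate cases $r=0$ and $r=1$ are immediate since then $r(r-1)/2=0$ and $q(r)\le p_g(V,p)$ holds trivially (using $q_I(0)=p_g(V,p)$ and \proref{p:nZ}(1)).

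There is no serious obstacle: the whole argument is a one-line counting after \proref{p:qr} is in hand. The only point requiring minor care is checking that $q_I(0)=p_g(V,p)$ (which is just $h^1(\cO_X)$), so that the telescoping sum really equals $p_g(V,p)-q(r)$; beyond that, everything is formal manipulation of the first-difference sequence $\Delta_I$.
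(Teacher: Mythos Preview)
Your proof is correct and is exactly the argument the paper has in mind: it invokes the same second-difference identity from \proref{p:qr}, the same non-negativity $\Delta_I(n)\ge 0$, and the strict decrease $\Delta_I(0)>\cdots>\Delta_I(r-1)\ge 0$ forced by the definition of $\nr(I)$, then telescopes. There is nothing to add.
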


In \cite[3.13]{OWYnrbr}, the hypersurface $V=\{x^a+y^b+z^c=0\}\subset \C^3$ with $p_g(V,o)=r(r-1)/2$ are classified.

\begin{rem}
Let $X\to Y$ be the contraction of $Z^{\bot}$ (cf. \remref{r:h1bot}). 
Then we obtain that $\brr(I)-1=\min\defset{n\in \Z_{\ge 0}}{H^1(I^n\cO_Y)=0}$ (cf. \cite[3.8]{o.h-ell}).
\end{rem}

\begin{rem}
The ideal $I$ is called the {\em $p_g$-ideal} if $q(I)=p_g\V$.
It immediately follows from \proref{p:nZ} that $\brr(I)=1$ if and only if $I$ is a $p_g$-ideal.
Moreover, the following are equivalent (see \cite[3.10]{OWYgood}, \cite[4.1]{OWYrees}):
\begin{itemize}
\item $I$ is a $p_g$-ideal.
\item $\cO_{C}(-Z)\cong \cO_C$, where $C$ is the cohomological cycle of $E$. 
\item The Rees algebra $\bigoplus_{n\ge 0}I^n$ is a Cohen-Macaulay  normal domain. 
\end{itemize} 
The $p_g$-ideals have nice properties and studied in \cite{OWYgood,OWYrees,OWYcore}. 
For example, if $I$ is a $p_g$-ideal and $J$ an $\m$-primary integrally closed ideal of $\cO_{V,p}$, then $IJ=\ol{IJ}$ and $q(IJ)=q(J)$; in particular, $p_g$-ideals form a semigroup with respect to the product.
\end{rem}

The singularity $\V$ is said to be {\em rational} if $p_g\V=0$.
Rational surface singularities can be characterized in many ways 
(\cite{artin.rat}, \cite{Li}, \cite{la.rat}, \cite{nem.ratLsp}, \cite{OWYcore}).
We have also a characterization in terms of the normal reduction numbers as follows.

\begin{prop}[{\cite[1.1]{OWYnrcn}}]\label{p:rat}
The following are equivalent:
\begin{enumerate}
\item $A$ is a rational singularity.
\item Every  $\m$-primary integrally closed ideal in  $\cO_{V,p}$ is a $p_g$-ideal.
\item $\brr(A) =1$.
\item $\nr(A)=1$.
\end{enumerate}
\end{prop}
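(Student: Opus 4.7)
The plan is to close the cycle $(1)\Rightarrow(2)\Leftrightarrow(3)\Rightarrow(4)\Rightarrow(2)\Rightarrow(1)$, using $(2)$ as the central meeting point. The routine pieces are quick: for $(1)\Rightarrow(2)$, the monotonicity $q_Z(1)\le q_Z(0)=p_g\V=0$ from \proref{p:nZ}(1) forces every $I=I_Z$ to satisfy $q(I)=0=p_g\V$, hence to be a $p_g$-ideal; for $(2)\Leftrightarrow(3)$, the Remark preceding the proposition already characterizes $p_g$-ideals by $\brr(I)=1$; and $(3)\Rightarrow(4)$ follows from $\nr(I)\le\brr(I)$, yielding $\nr\V\le\brr\V=1$.

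The main inductive step is $(4)\Rightarrow(2)$. Fix an arbitrary $\m$-primary integrally closed $I=I_Z$ and apply the hypothesis to $J_n:=\ol{I^n}$. Since $q$ is independent of the representation, \proref{p:qr}(1) applied to $J_n$, combined with $\nr(J_n)\le 1$ (so $\dim(\ol{J_n^2}/QJ_n)=0$), reduces to
\[
q_Z(2n)=2q_Z(n)-p_g\V.
\]
For $n\ge s(Z)$, $q_Z(n)$ has reached its stable value $\ell=h^1(\cO_C)$ of \proref{p:nZ}(3), so the identity becomes $\ell=2\ell-p_g\V$, forcing $\ell=p_g\V$. Monotonicity and $q_Z(0)=p_g\V$ then give $q_Z\equiv p_g\V$; in particular $q(I)=p_g\V$ and $I$ is a $p_g$-ideal.

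Finally $(2)\Rightarrow(1)$ is where I expect the main obstacle to lie. The idea is to produce an $I=I_Z$ whose cycle satisfies $Z^{\bot}=0$: by \proref{p:nZ}(3) the stable value of $q_Z$ would then be $h^1$ of the cohomological cycle of the zero reduced cycle, namely $0$, whereas hypothesis $(2)$ would force $q(\ol{I^k})=p_g\V$ for every $k\ge 1$, yielding $p_g\V=0$. To construct such a $Z$ on a resolution $\pi\colon X\to V$, I would exploit negative definiteness of the exceptional intersection matrix: solve $W^{\ast}\cdot E_i=-1$ for a $\Q$-cycle $W^{\ast}$, clear denominators, and pass to a sufficiently large integral multiple $Z$, so that $Z$ is strictly anti-nef and $\cO_X(-Z)$ is $\pi$-generated (either via \proref{p:nZ}(4) once the multiple exceeds $s(W^{\ast})$, or by arranging $-Z-K_X$ to have positive degree on every $E_i$). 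A tempting shortcut via \proref{p:imgq} fails because it supplies only pairs in $\cA'$ rather than $\cA$; bridging this gap is exactly Conjecture \conjref{c:q}, so the negative-definiteness construction is the correct workaround.
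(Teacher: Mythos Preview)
The paper does not prove \proref{p:rat}; it is cited from \cite[1.1]{OWYnrcn}, so there is no in-paper argument to compare against. Your proof is correct. The implications $(1)\Rightarrow(2)\Leftrightarrow(3)\Rightarrow(4)$ are routine as you say, and your $(4)\Rightarrow(2)$ is a clean device: applying \proref{p:qr}(1) at index~$1$ to each $J_n=\overline{I^n}$ yields the functional equation $q_Z(2n)=2q_Z(n)-p_g\V$, and evaluating at any $n\ge s(Z)$ pins the stable value at $p_g\V$, whence monotonicity forces $q_Z\equiv p_g\V$.

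For $(2)\Rightarrow(1)$ your strategy is right, but mind the small circularity you half-flag: \proref{p:nZ}(4) is stated under the standing hypothesis that $\cO_X(-Z)$ already has no fixed component, so it cannot be invoked to manufacture generation of $\cO_X(-nW^{\ast})$ from scratch. The correct route is your second option: a strictly anti-nef integral $Z$ has $-Z$ $\pi$-ample, so $\cO_X(-nZ)$ is $\pi$-generated for $n\gg 0$ by relative Serre vanishing (equivalently, on a surface, pick $n$ with $(-nZ-K_X)E_i>0$ for every $E_i$ and use R{\"o}hr's vanishing to kill $H^1$ along each $E_i$). With such an $nZ$ one has $(nZ)^{\bot}=0$, so \proref{p:nZ}(3) gives stable value $h^1(\cO_0)=0$; combined with $(2)$ and \proref{p:nZ}(2) this forces $p_g\V=0$.
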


\begin{rem}\label{r:mpg}
The singularities with $\brr(\m)=1$ ($\m$ is a $p_g$-ideal in this case) have been characterized in \cite[5.2]{PWY}.
In case $\V$ is Gorenstein and $p_g\V>0$,  the condition  $\brr(\m)=1$ implies that $\V$ is an elliptic double point (see \cite[4.3]{OWYrees}, \cite[4.10]{o.h-ell}).
\end{rem}

\subsection*{Elliptic singularities}

The elliptic singularities were introduced by P.~Wagreich,
and the theory of those singularities were
 developed by Wagreich \cite{wag.ell}, H.~Laufer \cite{la.me}, M.~Reid \cite[\S 4]{chap}, S.S.-T.~Yau \cite{yau.gor, yau.hyper, yau.normal, yau.max}, M.~Tomari
 \cite{tomari.char, tomari.ell}, and A.~N\'emethi \cite{nem.ellip}, Nagy--N\'emethi \cite{nn.ell, nn.AblIII}.

Let $Z_f$ denote the {\em fundamental cycle} on $X$, namely, the minimal non-zero anti-nef cycle.
The {\em fundamental genus} ${p_f\V}$ is defined by $p_f\V=p_a(Z)=1-\chi(\cO_Z)$. 
By the Riemann-Roch formula, $p_f\V=Z_f(Z_f+K_X)/2+1$.
This is independent of the choice of a resolution, and hence a topological invariant of the singularity $\V$.

\begin{defn}
The singularity $\V$ is said to be {\em elliptic} if $p_f\V=1$. 
\end{defn}

The following are well-known:
\begin{enumerate}
\item   For any positive integer $m$, there exists an elliptic singularity $\V$ with $p_g\V=m$ (Yau \cite[\S 2]{yau.max}).
\item   For any elliptic surface singularity $(V',p')$, there exists an elliptic singularity $\V$ with $p_g\V=1$ such that $(V',p')$ and $\V$ have the same topological type (Laufer \cite[Theorem 4.1]{la.me}).
\end{enumerate}

\begin{thm}[See {\cite[\S 3]{o.h-ell}}]\label{t:ellr2}
If $\V$ is elliptic, then  $\nr\V=\brr\V=2$.
In fact, $s(W)=1$ for any $(Y,W)\in \cA'$.
\end{thm}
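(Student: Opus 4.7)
The plan is to establish $\brr\V \le 2$, since then the chain
\[
2 \le \nr\V \le \brr\V \le 2
\]
yields the theorem. The first inequality holds because $p_g\V \ge p_f\V = 1$ (the surjection $H^1(\cO_X) \twoheadrightarrow H^1(\cO_{Z_f})$ coming from $H^2(\cO_X(-Z_f)) = 0$ gives $p_g \ge p_f$), so $\V$ is not rational and Proposition~\ref{p:rat} rules out $\nr = 1$. The middle inequality is immediate from $\nr(I) \le \brr(I)$ for every $I$. By Proposition~\ref{p:qr}(2), the upper bound $\brr\V \le 2$ is equivalent to $s(W) \le 1$ for every $(Y,W) \in \cA$; a pullback to a resolution on which $\cO(-W)$ becomes generated does not change $q_W$, extending the bound to all of $\cA'$ and thereby accounting for the ``$s(W)=1$'' claim.

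To control $s(W)$, I would feed the short exact sequence
\[
0 \to \cO_Y(-(n+1)W) \to \cO_Y(-nW) \to \cO_W(-nW) \to 0
\]
into cohomology. Because $V$ is Stein and $\pi$ has fibers of dimension $\le 1$, the Leray spectral sequence forces $H^2$ of every line bundle on $Y$ to vanish, so the long exact sequence terminates as
\[
H^1(\cO_Y(-(n+1)W)) \to H^1(\cO_Y(-nW)) \to H^1(\cO_W(-nW)) \to 0.
\]
Consequently, if $h^1(\cO_W(-nW)) = 0$ for every $n \ge 1$, the first arrow is surjective; together with the monotonicity $q_W(n+1) \le q_W(n)$ from Proposition~\ref{p:nZ}(1) this forces $q_W(n) = q_W(n+1)$ for all $n \ge 1$, that is, $s(W) \le 1$.

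The main obstacle is therefore the vanishing $h^1(\cO_W(-nW)) = 0$ for $n \ge 1$. Via Serre duality on the Cohen--Macaulay one-dimensional scheme $W$ and the adjunction isomorphism $\omega_W \cong \cO_W(K_Y + W)$, this is equivalent to
\[
h^0(\cO_W(K_Y + (n+1)W)) = 0 \qquad (n \ge 1).
\]
To prove this in the elliptic setting I would invoke the Yau--Laufer elliptic sequence $Z_f = Z_1 > Z_2 > \cdots > Z_m$ on a sufficiently high resolution, filter $W$ by subcycles adapted to this sequence, and run a Laufer-type computation-sequence argument. On every component $E_i$ with $WE_i < 0$ the factor $(n+1) \ge 2$ absorbs any positivity of $K_Y \cdot E_i$, forcing negative degree of the restricted bundle. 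The technically subtlest point is the components $E_i \subset W^\perp$, where the cancellation is not automatic: there one must exploit the defining identities $p_a(Z_i) = 1$ of the elliptic sequence together with the structure of the cohomological cycle of $W^\perp$ (cf.\ Remark~\ref{r:h1bot}) to rule out a global section of $\cO_W(K_Y + (n+1)W)$. This elliptic-specific cohomology control is the heart of the matter; once it is in place, the three-line cohomology argument above closes the proof.
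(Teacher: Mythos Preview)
Your reduction to the inequality $s(W)\le 1$ is correct, and you have correctly identified that the elliptic sequence is the essential structural input. However, both the intermediate target you choose and the argument you sketch for it diverge from the paper, and your sketch has a real gap.

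The paper does \emph{not} aim for the vanishing $h^1(\cO_W(-nW))=0$. Its key lemma is \proref{p:ellh}: for any $W$ with $\cO_X(-W)$ free of fixed components one has $h^1(\cO_X(-W))=h^1(\cO_{C_W})$, where $C_W$ is the cohomological cycle of $W^{\bot}$. Since $(nW)^{\bot}=W^{\bot}$, this instantly gives $q_W(n)=h^1(\cO_{C_W})$ for all $n\ge 1$, hence $s(W)\le 1$. The proof of \proref{p:ellh} combines Yau's elliptic sequence with R\"ohr's vanishing theorem \cite{rohr}, which gives a numerical criterion for $h^1(\cO_X(-D))=0$; this is the tool you are missing. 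Working directly with $h^1(\cO_X(-W))$ avoids Serre duality on the non-reduced curve $W$ and sidesteps the delicate $W^{\bot}$ analysis entirely.

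Your proposed route through $h^0(\cO_W(K_Y+(n+1)W))=0$ contains an unjustified step. The assertion that for every $E_i$ with $WE_i<0$ the factor $(n+1)\ge 2$ ``absorbs'' $K_YE_i$ fails already for $n=1$: take an elliptic configuration containing a rational curve $E_i$ with $E_i^2=-5$ (so $K_YE_i=3$) and an anti-nef $W$ with $WE_i=-1$; then $(K_Y+2W)E_i=1>0$. Such configurations occur among elliptic graphs, so a naive component-by-component degree count cannot close the argument. Whatever computation sequence you run will have to incorporate exactly the elliptic-sequence bookkeeping that, in the paper's organization, is packaged into R\"ohr's criterion and \proref{p:ellh}.

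A minor point: your passage from $\cA$ to $\cA'$ by ``pullback'' is not quite right. Pulling $W$ back along a blow-up preserves $q_W$ but does not make $\cO(-W)$ generated; resolving base points replaces $W$ by a strictly larger cycle $W'$, and there is no reason for $q_{W'}(n)$ to agree with $q_W(n)$. The paper avoids this by proving \proref{p:ellh} directly for $(Y,W)\in\cA'$.
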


The point of the proof of \thmref{t:ellr2} is as follows.
Using Yau's elliptic sequences and R{\"o}hr's vanishing theorem (\cite{rohr}), we have 
\begin{prop}[cf. {\cite[3.11]{o.h-ell}}]\label{p:ellh}
If  $\V$ is elliptic and $W>0$ is a cycle on $X$ such that $\cO_X(-W)$ has no fixed component, then $h^1(\cO_X(-W))=h^1(\cO_{C_W})$, where $C_W$ is the cohomological cycle of $W^{\bot}$.
\end{prop}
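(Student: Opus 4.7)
The plan is to split the proof into two ingredients and combine them via a short exact sequence. Writing $C:=C_W$ for brevity, starting from
\begin{equation*}
0\to \cO_X(-W-C)\to \cO_X(-W)\to \cO_X(-W)|_{C}\to 0,
\end{equation*}
if one can establish both $\cO_X(-W)|_{C}\cong \cO_{C}$ and $H^1(\cO_X(-W-C))=0$, then taking cohomology yields $H^1(\cO_X(-W))\cong H^1(\cO_{C})$, which is exactly the claim $h^1(\cO_X(-W))=h^1(\cO_{C_W})$.

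For the first ingredient, triviality of the restriction, I would argue as follows. Since $C\le W^{\bot}$, the line bundle $\cO_X(-W)|_{C}$ has degree $-WE_i=0$ on every component $E_i\le C$. The hypothesis that $\cO_X(-W)$ has no fixed component supplies a section $h\in H^0(\cO_X(-W))$ with $(h)_E=W$, whose restriction to $C$ is a global section of this degree-zero bundle; the task is to show that it vanishes nowhere on $C$. Here ellipticity enters decisively through Yau's elliptic sequence on the reduced sub-configuration $W^{\bot}$: the components of $W^{\bot}$ inherit an elliptic (or rational) sequence whose terms build up $C$, and on each successive piece one checks triviality either via degree-zero line bundles admitting sections on rational chains, or via the rigidity of degree-zero line bundles on a minimally elliptic cycle with a prescribed section (Yau).

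For the second ingredient I would apply R\"ohr's vanishing theorem to $\cO_X(-W-C)$. By construction of the cohomological cycle, no proper enlargement of $C$ inside $W^{\bot}$ gains $h^1$, and $W+C$ is strictly positive on every component of $W^\bot$ in the sense required; combined with the elliptic hypothesis $p_f\V=1$, this matches the positivity input of R\"ohr's theorem and forces $H^1(\cO_X(-W-C))=0$. The long exact sequence of the short exact sequence above then completes the argument, the connecting map $H^0(\cO_C)\to H^1(\cO_X(-W-C))$ landing in zero and the cokernel on the $H^1$ side being zero by the same vanishing.

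The main obstacle is the first step: upgrading ``degree zero on each component of $C$'' to honest triviality of $\cO_X(-W)|_{C}$. This is where ellipticity is indispensable and cannot be replaced by naive cohomological input, since a generic degree-zero line bundle on a positive-genus curve admits no global section at all; one must exploit both the existence of the tautological section $h$ and the rigid minimally-elliptic-plus-rational-chains structure of $C$ provided by Yau's theory. Once triviality is in hand, R\"ohr's vanishing and the three-term sequence argument are essentially mechanical.
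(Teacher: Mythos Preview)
Your short-exact-sequence framework is sound, and the paper's one-line indication (``using Yau's elliptic sequences and R{\"o}hr's vanishing theorem'') confirms that these are the two external ingredients; the paper does not spell out the argument but defers to \cite[3.11]{o.h-ell}.

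However, you have the difficulties inverted. The triviality $\cO_X(-W)|_C\cong\cO_C$ is elementary and does \emph{not} require ellipticity. Take $h$ with $(h)_E=W$, so $\di_X(h)=W+H$ with $H\ge 0$ containing no exceptional component. For any $E_i\le W^{\bot}$ one has $0=\di_X(h)\cdot E_i=WE_i+HE_i=HE_i$, so $H$ is disjoint from $\supp(W^{\bot})\supset\supp(C)$. The zero locus of $h$ regarded as a section of the line bundle $\cO_X(-W)$ is exactly $H$, which misses $C$; hence $h|_C$ generates $\cO_X(-W)|_C$ everywhere. This holds for any normal surface singularity. Your worry that ``a generic degree-zero line bundle on a positive-genus curve admits no global section'' is a red herring: this particular line bundle comes equipped with a section that is visibly nowhere-vanishing.

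The real obstacle is your second ingredient, $H^1(\cO_X(-W-C))=0$, and here your sketch is too thin to count as a proof. You have not stated what numerical hypothesis R{\"o}hr's theorem requires, nor shown that $W+C$ satisfies it; the phrase ``strictly positive on every component of $W^{\bot}$ in the sense required'' is a placeholder, not an argument. It is precisely here that both ellipticity and Yau's elliptic sequences enter: they describe the cohomological cycle $C$ concretely (in terms of the minimally elliptic cycle and the elliptic sequence on $W^{\bot}$), and this structural information together with $p_f=1$ is what lets one check R{\"o}hr's positivity condition for $W+C$. Without supplying that verification, step two remains an assertion of the conclusion.
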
 
This proposition implies that $h^1(\cO_{C_{Z}})=q_Z(n)$ for $n\ge 1$ (take $W=nZ$). If $I$ is not a $p_g$-ideal, then $s(Z)=1$, and $\brr(I)=2$ by \proref{p:qr} (2).

\begin{prop}[cf. {\cite[3.12]{o.h-ell}}]\label{p:ellA}
If $\V$ is elliptic, then $q(\cA)=q(\cA')$.
\end{prop}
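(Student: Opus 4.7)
The plan is to establish the nontrivial inclusion $q(\cA') \subset q(\cA)$, since the reverse inclusion $q(\cA) \subset q(\cA')$ is immediate from $\cA \subset \cA'$. Given any $k \in q(\cA')$, I would pick $(Y, W_0) \in \cA'$ with $h^1(\cO_Y(-W_0)) = k$ and replace $W_0$ by a sufficiently large multiple $nW_0$ in order to land inside $\cA$ without changing the first cohomology.

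The first step is to observe that the orthogonal complement is insensitive to scaling: as reduced cycles one has $(nW_0)^\bot = W_0^\bot$, because $(nW_0)E_i = 0$ if and only if $W_0 E_i = 0$. Consequently the cohomological cycle $C$ of $W_0^\bot$ simultaneously plays the role of the cohomological cycle of $(nW_0)^\bot$ for every $n \ge 1$. Moreover, if $h \in H^0(\cO_Y(-W_0))$ achieves $(h)_E = W_0$, then $h^n$ witnesses that $nW_0$ also has no fixed components, so each $(Y, nW_0)$ still lies in $\cA'$.

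The crucial step, where the elliptic hypothesis enters, is to apply Proposition \ref{p:ellh} to both $W_0$ and $nW_0$, yielding
\[
h^1(\cO_Y(-nW_0)) = h^1(\cO_C) = h^1(\cO_Y(-W_0)) = k
\]
for every $n \ge 1$. Finally, Proposition \ref{p:nZ}(4) guarantees that $\cO_Y(-nW_0)$ is $\pi$-generated once $n > s(W_0)$, so picking any such $n$ produces $(Y, nW_0) \in \cA$ realizing the value $k$, completing the desired inclusion.

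The main obstacle is precisely the strong identification $h^1(\cO_Y(-W)) = h^1(\cO_{C_W})$ already at $n = 1$, supplied by Proposition \ref{p:ellh} via Yau's elliptic sequences and R\"ohr's vanishing. For a general singularity one only has the asymptotic statement in Proposition \ref{p:nZ}(3) — that $h^1$ eventually stabilizes at $h^1(\cO_C)$ for $n \ge s(W)$ but may strictly decrease along the way — so replacing $W$ by a large multiple would generally change $h^1$ and the argument would break down. This is exactly the reason Conjecture \ref{c:q} remains open beyond the elliptic case.
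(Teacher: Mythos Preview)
Your argument is correct and essentially matches the paper's own proof: both pass from a cycle $W$ with no fixed components to a multiple $nW$, use \proref{p:ellh} (together with $(nW)^{\bot}=W^{\bot}$) to see that $q$ is unchanged, and invoke \proref{p:nZ}(4) to obtain generatedness. The only cosmetic differences are that the paper first collects witnesses $W_0,\dots,W_{p_g}$ via \proref{p:imgq} and uses the explicit bound $s(W_i)\le 1$ from \thmref{t:ellr2} to take $n=2$, whereas you argue for an arbitrary $k\in q(\cA')$ and an unspecified $n>s(W_0)$.
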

\begin{proof}
By \proref{p:imgq}, there exist a resolution $Y$ and cycles $W_0, \dots, W_{p_g\V}$ on $Y$ such that $q(W_i)=i$.
Since $s(W_i)=1$, \proref{p:nZ} and \ref{p:ellh} imply that $\cO_Y(-2W_i)$ is generated and $q(W_i)=q(2W_i)$.
\end{proof}

\begin{prob}
Characterize the singularities $\V$ with $\brr\V=2$.
Is the converse of \thmref{t:ellr2} true?
\end{prob}

We define a topological invariant $\minpg\V$ to be the minimum of the geometric genus $p_g$ of normal complex surface singularities homeomorphic to $\V$.
For example, if $\V$ is elliptic, then $\brr\V-1=1=\minpg\V$ by \thmref{t:ellr2} and Laufer's result mentioned above.
Let us recall that $\brr\V\le p_g\V+1$ (\proref{p:qr}).
\begin{prob}
For a normal complex surface singularity $\V$, does the inequality $\brr\V\le \minpg\V+1$ hold? 
Characterize singularities which satisfy $\brr\V= \minpg\V+1$.
\end{prob}

\section{Cone-like singularities}\label{s:cone}

If $C$ is a nonsingular projective curve over $\C$ and $D$ an ample divisor on $C$, then $V(C,D):=\spec \bigoplus_{n \ge 0} H^0(\cO_C(nD))$
is a normal surface with at most an isolated singularity at the ``vertex''(cf. \cite{p.qh}).
Such a singularity is called a {\em cone singularity}. The exceptional set of the minimal resolution of $V(C,D)$ is isomorphic to $C$ with self-intersection number $-\deg D$.
For example, if $R = \oplus_{n\ge 0} R_n$ is a two-dimensional normal graded ring generated by $R_1$ over $R_0=\C$, then $\spec R$ has a cone singularity.  

\begin{defn}\label{d:cone}
Let $\pi_0 \: X_0 \to V$ be the minimal resolution of the singularity $\V$ and $F$ the exceptional set of $\pi_0$.
We call $\V$ a {\em cone-like singularity} if $F$ consists of a unique smooth curve. Note that in this case $\V$ is homeomorphic to the cone singularity $(V(F,-F|_F), \text{vertex})$.
\end{defn}

 In the rest of this section, we always assume that $\V$ is a cone-like singularity.
Let $g$ denote the genus of the exceptional curve $F$ of the minimal resolution $\pi_0 \: X_0 \to V$ and let ${d}=-F^2$. 
Assume that $g \ge 1$.
Let $\pi\: X \to V$ be any resolution with exceptional set $E$ as in the preceding section. 
Then we have a natural morphism $X \to X_0$.
We denote by $E_0\subset X$ the proper transform of $F$; 
this is the unique irreducible exceptional curve on $X$ with positive genus.   
Note that $d=-Z_f^2$ because $F$ is the fundamental cycle on $X_0$; the number $d$ is sometimes called the degree of $\V$.

\begin{defn} \label{gon}
Let $C$ be a nonsingular projective curve. The {\em gonality} of the curve 
$C$ is the minimum of the degree of surjective morphisms 
from $C$ to $\PP^1$, and denoted by $\gon(C)$.
It is known that $\gon(F) \le \fl{(g+3)/2}$.
\end{defn}

\begin{defn} \label{Symbol}
For any $\alpha\in \R$, 
let $[[\alpha]]=\min\defset{m\in \Z}{m>\alpha}$.
For example, $[[2]]=[[5/2]]=3$.
\end{defn}

We give an upper bound for $\brr\V$ using the invariants $g$, $d$, $\gon(E_0)$.
Note that $g$ and $d$ are topological invariant of $\V$, but $\gon(E_0)$ is not.

\begin{thm}[{\cite[3.9]{OWYnrcn}}]\label{main}  
Let $\V$ be a cone-like singularity 
and let $I = I_Z$ be an $\m$-primary integrally closed ideal represented by a cycle $Z$ on the resolution $X$.  
Then we have the following.
\begin{enumerate}
\item If  $ZE_0=0$, then   $\brr(I)\le [[(2g-2)/d]]+1$.
\item If $ZE_0<0$, then $\brr(I)\le [[(2g-2)/\gon(E_0)]]+1$.
\end{enumerate}
 In particular, {$\brr\V\le [[(2g-2)/\min\{d, \gon(E_0)\}]]+1$}.
\end{thm}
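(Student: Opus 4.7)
The plan is to reduce the theorem to a cohomological vanishing on the non-reduced curve $Z$. By \proref{p:qr}(2), $\brr(I) = s(Z)+1$, and by \proref{p:nZ}(1) the sequence $q_Z(n)$ is monotone nonincreasing, so it suffices to find a single value of $n$ equal to the asserted bound minus one for which $q_Z(n) = q_Z(n+1)$. The standard short exact sequence
\[
0 \to \cO_X(-(n+1)Z) \to \cO_X(-nZ) \to \cO_Z(-nZ) \to 0
\]
and its long exact cohomology sequence show that the vanishing $H^1(\cO_Z(-nZ)) = 0$ already forces $q_Z(n+1) \ge q_Z(n)$, hence equality.

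To analyze $H^1(\cO_Z(-nZ))$ I would strip $Z$ one component at a time: choose a chain $Z = Z_0 > Z_1 > \cdots > Z_N = 0$ with $Z_{j-1} - Z_j = E_{i_j}$, so that the short exact sequences
\[
0 \to \cO_{E_{i_j}}(-nZ - Z_j) \to \cO_{Z_{j-1}}(-nZ) \to \cO_{Z_j}(-nZ) \to 0
\]
reduce, by induction on $j$, the desired vanishing to $H^1(\cO_{E_{i_j}}(-nZ - Z_j)) = 0$ for each $j$. On the rational components the required degree bound is merely $\ge -1$, which can be arranged by stripping in an order that peels leaves of the dual graph first. The real content is on $E_0$, where the bound needed is $\deg_{E_0}(-nZ - Z_j) \ge 2g - 1$.

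In Case (2), $\cO_{E_0}(-Z)$ is a generated line bundle of positive degree $k := -Z\cdot E_0$, and the existence of a base-point-free pencil in $|\cO_{E_0}(-Z)|$ yields $\gon(E_0) \le k$; consequently $[[(2g-2)/\gon(E_0)]] \ge [[(2g-2)/k]]$, and the asserted bound follows from the naive calculation $\deg_{E_0}(-nZ - Z_j) = nk - Z_j\cdot E_0 \ge 2g - 1$, which, with a suitable stripping order (say, removing all non-$E_0$ components first so that $Z_j\cdot E_0 \le 0$ at the critical steps), is met once $nk > 2g - 2$. In Case (1), $Z\cdot E_0 = 0$, so $\deg_{E_0}(-nZ) \equiv 0$ and the naive approach collapses. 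The substitute is to exploit the fundamental cycle $Z_f$: since $-Z_f^2 = d$, a variant of the filtration argument with $Z_f$ playing the role of $Z$ harvests degree $d$ per unit of $n$ on $E_0$, and $H^1$-vanishing on the piece involving $E_0$ kicks in once $nd > 2g - 2$.

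The main obstacle is Case (1), where $Z$ itself is numerically trivial against $E_0$, so the positivity needed for vanishing on $E_0$ must be imported from $Z_f$ via an indirect comparison. The filtration of $Z$ has to be orchestrated so that the $\ge -1$ bound on every rational component and the $\ge 2g - 1$ bound on $E_0$ hold simultaneously at every step, and calibrating these constraints against the cone-like combinatorics of the resolution graph---where $E_0$ is the unique component of positive genus---is the delicate technical heart of the argument.
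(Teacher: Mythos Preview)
Your reduction to $H^1(\cO_Z(-nZ))=0$ via a computation sequence is exactly the R\"ohr-type vanishing the paper invokes, and for Case~(2) it goes through: $\cO_{E_0}(-Z)$ is generated of degree $k=-ZE_0>0$, a generic pencil gives $\gon(E_0)\le k$, and the $E_0$-steps of the stripping succeed once $nk>2g-2$. (The rational steps also need care---``peel leaves first'' is not quite the right rule; one needs a genuine computation sequence so that $Z_jE_{i_j}\le 1$, and this is where R\"ohr's lemma does the work---but that is a detail, not a gap.)

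The real problem is Case~(1). When $ZE_0=0$ the line bundle $\cO_{E_0}(-Z)$ is generated of degree $0$, hence \emph{trivial}; so $\cO_{E_0}(-nZ)\cong\cO_{E_0}$ for every $n$, and the surjection
\[
H^1(\cO_Z(-nZ))\twoheadrightarrow H^1(\cO_{E_0}(-nZ))=H^1(\cO_{E_0})\cong\C^g
\]
shows $h^1(\cO_Z(-nZ))\ge g>0$ for all $n$. The vanishing you are aiming for is therefore \emph{false} in Case~(1); no stripping order fixes this, since at the last $E_0$-step you would need $\deg_{E_0}(-nZ)\ge 2g-1$, i.e.\ $0\ge 2g-1$. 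Your proposal to let ``$Z_f$ play the role of $Z$'' is not an argument: the quantity $\brr(I_Z)$ concerns powers of $Z$, and there is no way to insert copies of $Z_f$ into the filtration without changing the ideal.

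What must happen instead in Case~(1) is that one proves $q_Z(n)=q_Z(n+1)$ \emph{without} passing through $h^1(\cO_Z(-nZ))=0$. Note that by \proref{p:nZ}(3) the sequence $q_Z(n)$ stabilizes at $h^1(\cO_C)>0$, where $C$ is the cohomological cycle of $Z^{\bot}\supset E_0$; the triviality of $\cO_{E_0}(-Z)$ (more generally of $\cO_X(-Z)$ along all of $Z^{\bot}$, since a general $h$ with $(h)_E=Z$ has $(\di_X(h)-Z)\cdot Z^{\bot}=0$) lets one compare $H^1(\cO_X(-nZ))$ directly to $H^1(\cO_C)$ via the sequence $0\to\cO_X(-nZ-C)\to\cO_X(-nZ)\to\cO_C\to 0$, and then apply the R\"ohr-type vanishing to $\cO_X(-nZ-C)$. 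It is in this auxiliary vanishing that the quantity $d=-Z_f^2$ enters, through the coefficient of $E_0$ in $Z$ and the relation $ZE_0=0$. Your sketch does not contain this mechanism.
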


For the proof we apply R{\"o}hr's vanishing theorem (see \cite[\S3]{OWYnrcn} for the details).
The following example is a special case of \cite[3.10]{OWYnrcn} (take $b=g$).

\begin{ex}\label{nr<br}
Let $C$ be a hyperelliptic curve 
with genus $g\ge 2$
 and $D_0$ a divisor on $C$ which is the pull-back of a point via the double cover $C\to \PP^1$. 
Let $D=gD_0$ and $V=\spec\bigoplus_{n \ge 0} H^0(X, \cO_C(nD))$. 
Then $C\cong F\subset X_0$.
We have $p_g\V=g$ by \cite[Theorem 5.7]{p.qh}.  

If we take  a general  element $h\in H^0(\cO_{X_0}(-F))$, then $\di_{X_0}(h)=F+H$, 
where $H$ is the non-exceptional part and $F\cap H$ consists of distinct $2g$ points $P_1, \dots, P_{2g}$.  
We may assume that $P_1+P_2\sim D_0$.
Let $\phi\:X \to X_0$ be the blowing-up with center $\{P_3, \dots, P_{2g}\}$  and let $Z=(h)_E$, the exceptional part of $\di_X(h)$.
If we put ${E_i}=\phi^{-1}(P_i)$ for $3\le i \le 2g$, then 
 ${Z}=E_0+2(E_3+\cdots+E_{2g})$.
We can see that  $\cO_X(-Z)$ is generated since a general element of $H^0(\cO_X(-2F))$ has no zero on $H$. 

Then we have $h^1(\cO_X(-(g-1)Z))\ge h^1(\cO_{E_0}(-(g-1)Z))= h^1(K_{C})=1$
 and $H^1(\cO_X(-gZ))=0$.
It follows from \proref{p:nZ} (1) and \proref{p:qr} (2) that $q_Z(n) = g -n$ for $0\le n\le g$.  
Hence we have $\brr(I_Z)=p_g\V+1=[[(2g-2)/\gon(E_0)]]+1$, 
$\nr(I_Z) =1$, $q(\cA)=q(\cA')$. 
\end{ex}

\subsection*{Homogeneous hypersurface singularities}

Assume that $V\subset \C^3$ is a hypersurface defined by a homogeneous polynomial $f\in \C[x,y,z]$ with degree $d\ge 3$ ($\deg x= \deg y =\deg z=1$) having an isolated singularity at the origin $p\in \C^3$.
Then $F\cong \{f=0\}\subset \PP^2$, $g=(d-1)(d-2)/2$.
Let $D=-F|_F$. 
Then $V=\spec \bigoplus_{n \ge 0} H^0(\cO_C(nD))$.
Since $\m=I_F$, we have 
\[
q_{F}(n)=h^1(\cO_Y(-nF))=\sum _{m\ge n}h^1(\cO_F(mD))
=\sum_{m=n}^{d-3} \binom{d-1-m}{2}
=\binom{d-n}{3}.
\]
Hence we have  $\nr(\m) = \brr(\m)  = d-1$ by \proref{p:qr}.
By the definition, $\brr\V\ge d-1$. 
On the other hand, by Namba's theorem (Max Noether's theorem) 
\cite[Theorem 2.3.1]{Namba767}, we have $\gon(F) =d-1$.
By \thmref{main}, we have 
\[
\brr\V\le [[(2g-2)/(d-1)]]+1=[[d-2-2/(d-1)]]+1=d-1. 
\]
Hence we obtain 
\begin{thm}[{\cite[4.1]{OWYnrcn}}]\label{t:Brahma}
$\nr(\m)=\brr(\m)=\nr\V=\brr\V=d-1$.
\end{thm}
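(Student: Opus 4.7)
My plan is to combine two ingredients: a direct cohomological computation that pins down $\nr(\m)$ and $\brr(\m)$, and the general upper bound of \thmref{main}, which will close the gap once combined with Namba's gonality theorem. Since
\[
\nr(\m)\le \nr\V\le \brr\V \quad\text{and}\quad \brr(\m)\le \brr\V
\]
hold by definition, the four claimed equalities will follow once I match the lower bounds $\nr(\m)=\brr(\m)=d-1$ with the upper bound $\brr\V\le d-1$.

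For the first step, blowing up the origin resolves the homogeneous hypersurface: the resulting $\pi\colon X\to V$ has smooth irreducible exceptional divisor $F\cong\{f=0\}\subset\PP^2$ of genus $g=(d-1)(d-2)/2$ with $F^2=-d$, and $\m\cO_X=\cO_X(-F)$, so $\m=I_F$ in the sense of \sref{s:h-nr}. The identifications $\cO_X(-F)|_F\cong \cO_F(1)$ (degree $d$) and $K_F\cong \cO_F(d-3)$ (plane curve adjunction), together with Serre duality, give $h^1(\cO_F(-mF))=\binom{d-1-m}{2}$ for $0\le m\le d-3$ and $0$ otherwise. The exact sequences
\[
0\to \cO_X(-(m+1)F)\to \cO_X(-mF)\to \cO_F(-mF)\to 0,
\]
combined with $\pi$-generatedness of each $\cO_X(-mF)$ (which makes $H^0(\cO_X(-mF))\to H^0(\cO_F(-mF))$ surjective), telescope to
\[
q_F(n)=\sum_{m\ge n}h^1(\cO_F(-mF))=\sum_{m=n}^{d-3}\binom{d-1-m}{2}=\binom{d-n}{3}
\]
via the hockey-stick identity. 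Plugging into \proref{p:qr}, the first difference $\Delta_F(n)=\binom{d-n-1}{2}$ first vanishes at $n=d-1$, and the second difference $\Delta_F(n-1)-\Delta_F(n)=d-n-1$ also first vanishes at $n=d-1$; hence $\nr(\m)=\brr(\m)=d-1$.

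For the upper bound on $\brr\V$ I invoke \thmref{main} together with Namba's form of Max Noether's theorem, which supplies $\gon(F)=d-1$ for a smooth plane curve of degree $d\ge 3$. Then $\min\{d,\gon(F)\}=d-1$ and
\[
\frac{2g-2}{d-1}=\frac{d(d-3)}{d-1}=d-2-\frac{2}{d-1}\in(d-3,d-2),
\]
so $[[(2g-2)/(d-1)]]=d-2$; therefore $\brr(I)\le d-1$ for every $\m$-primary integrally closed ideal $I$ and consequently $\brr\V\le d-1$. Combined with $\brr(\m)=d-1$ this forces $\brr\V=d-1$, and the sandwich $d-1=\nr(\m)\le \nr\V\le \brr\V=d-1$ yields $\nr\V=d-1$ as well.

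The main obstacle lies in the middle paragraph: executing the telescoping cohomology computation cleanly (identifying the normal bundle, confirming $\pi$-generatedness at every level so the connecting maps vanish on $H^0$, and carrying the binomial arithmetic through Pascal's rule). Once $q_F(n)=\binom{d-n}{3}$ is secured, everything else is a short invocation of \proref{p:qr}, \thmref{main}, and Namba's theorem.
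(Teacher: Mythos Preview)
Your approach is essentially identical to the paper's: both compute $q_F(n)=\binom{d-n}{3}$ directly, read off $\nr(\m)=\brr(\m)=d-1$ from \proref{p:qr}, and then close the loop via \thmref{main} together with Namba's theorem $\gon(F)=d-1$. One small slip to fix: $\pi$-generatedness of $\cO_X(-mF)$ does not by itself imply that the restriction $H^0(\cO_X(-mF))\to H^0(\cO_F(-mF))$ is surjective (it only says the image generates the sheaf, not that it fills all of $H^0$); the correct reason here is that the homogeneous hypersurface is literally the cone $V(F,\cO_F(1))$, equivalently that a smooth plane curve is projectively normal, so $R_m\cong H^0(\cO_F(m))$ for every $m\ge 0$ and your telescoping goes through as written.
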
  

\begin{rem}[See {\cite[\S4]{OWYnrcn}}]
Suppose that $R = \oplus_{n\ge 0} R_n$ is a normal graded ring generated by $R_1$ over $R_0=\C$ and $V=\spec R$. 
Then $\m^n=\ol{\m^n}$.
Let $a(R)$ denote the $a$-invariant of $R$ (see \cite{G-W}). 
If $Q$ is a minimal reduction of $\m$ generated 
by elements of $R_1$, we can see 
\[
\m^{a(R)+2} \ne Q\m^{a(R)+1} \ \ \text{ and} \ \  \nr(\m) = a(R) + 2 = \brr(\m).
\]
If $R=\C[x,y,z]/(f)$ as above, then $a(R)=d-3$ (cf. \cite[(3.1.6)]{G-W}).
\end{rem}

\section{Brieskorn complete intersections}\label{s:BCI}

In \cite{OWYnrbr}, we obtained an explicit expression of $\brr(\m)$ for Brieskorn hypersurfaces using ring-theoretic arguments 
and gave a classification of Brieskorn hypersurfaces having elliptic singularities.
In this section, we extend these results to the case of Brieskorn complete intersections, using resolution of singularities.

In the following, we assume that $V\subset \C^m$ is a Brieskorn complete intersection define by the following $m-2$ polynomials:
\[
q_{i1}x_1^{a_1}+\cdots +q_{im}x_{m}^{a_{m}} 
\quad (q_{ij}\in \C, \quad i=3,\dots , m), 
\]
where $a_i$ are integers such that $2\le a_1\le \dots \le a_m$.
We also assume that $V$ has an isolated singularity at the origin $p\in \C^m$.
Then, since every maximal minor of the matrix $(q_{ij})$ does not vanish  (see \cite[\S 7]{SfMfd}), we may assume that
\begin{equation}\label{eq:mat}
(q_{ij})= \begin{pmatrix}
 1 & 0 & \cdots & 0 & p_1 & q_1 \\
 0 & 1 & \cdots & 0 & p_2 & q_2  \\
 \vdots & \vdots & \ddots & \vdots & \vdots & \vdots \\
 0 & 0 & \cdots & 1 & p_{m-2} & q_{m-2} 
 \end{pmatrix},
\end{equation}
where $p_i, q_i\ne 0$ and $p_iq_j\ne p_jq_i$ for $i\ne j$.

\subsection{The maximal ideal cycle, the fundamental cycle, and the canonical cycle}

We summarize the results in \cite{MO} which will be used in this section; those are a natural extension of the hypersurface case obtained by Konno and Nagashima \cite{K-N}.
In the following, we assume that $\pi\: X\to V$ is the minimal good resolution. Since $\V$ is Gorenstein, the canonical cycle $Z_{K_X}$ is an effective cycle.

We define  positive integers $\ell$, $\ell_i$, $\alpha$, $\alpha_i$, $\hat g$, $\hat g_i$, 
and $\lambda_i$ as follows\footnote{Using the notation of \cite[\S 3]{MO}, we have $l=d_m$, $\ell_i=d_{im}$, $\alpha_i=n_{im}$, $\lambda_i=e_{im}$, $\lambda_m=e_{mm}=e_m$. }:
\begin{gather*}
\ell:=\lcm(a_1, \dots, a_m), \ \ 
\ell_i:=\lcm(a_1, \dots, \hat {a_i}, \dots, a_m), \; 
\text{where $\hat {a_i}$ is omitted}, \\ 
\alpha_i:=\ell/\ell_i,  \ \
\alpha:=\alpha_1\cdots \alpha_m, \ \ 
\hat g:=a_1\cdots a_{m}/\ell, \ \ 
\hat g_i:=\hat g \alpha_i/a_i, \ \ 
\lambda_i:=\ell/a_i. 
\end{gather*}
We easily see that the polynomials $x_i^{a_i}+p_ix_{m-1}^{a_{m-1}}+q_ix_m^{a_m}$ are weighted homogeneous polynomials of degree $\ell$ with respect to the weights $(\lambda_1, \dots, \lambda_m)$.
Then the weighted dual graph of the exceptional set $E$
 is as in \figref{fig:BCIG}, where 
\[
E=E_0+\sum_{w=1}^{m}\sum_{\nu=1}^{s_{w}}
\sum_{\xi=1}^{\hat g_w}E_{w,\nu,\xi},
\]
 $g$ denotes the genus of the central curve $E_0$, $c_0=-E_0^2$, and $c_{w,v}=-E_{w,\nu,\xi}^2$ (see \cite[4.4]{MO}).

\begin{figure}[htb]
 \begin{center}
$
\xy
(-9,0)*+{E_0}; (0,0)*+{-c_0}*\cir<10pt>{}="E"*++!D(-2.0){[g]}; 
(20,31)*+{-c_{1,1} }*\cir<14pt>{}="A_1"*++!D(-2.0){E_{1,1,1}}; 
(40,31)*+{-c_{1,2} }*\cir<14pt>{}="A_2"*++!D(-2.0){E_{1,2,1}}; 
(85,31)*+{-c_{1,s_1} }*\cir<16pt>{}="A_3"*++!D(-2.0){E_{1,s_1,1}}; 
(20,14)*+{-c_{1,1} }*\cir<14pt>{}="B_1"*++!D(-2.0){E_{1,1,\hat g_1}}; (40,14)*+{-c_{1,2} }*\cir<14pt>{}="B_2"*++!D(-2.0){E_{1,2,\hat g_1}}; (85,14)*+{-c_{1,s_1} }*\cir<16pt>{}="B_3"*++!D(-2.0){E_{1,s_1,\hat g_1}}; 
(20,-14)*+{-c_{m,1} }*\cir<14pt>{}="D_1"*++!D(-2.0){E_{m,1,1}}; 
(40,-14)*+{-c_{m,2} }*\cir<14pt>{}="D_2"*++!D(-2.0){E_{m,2,1}}; 
(85,-14)*+{-c_{m,s_{m}} }*\cir<16pt>{}="D_3"*++!D(-2.0){E_{m,s_{m},1}};
(20,-31)*+{-c_{m,1} }*\cir<14pt>{}="E_1"*++!D(-2.0){E_{m,1,\hat g_{m}}};
(40,-31)*+{-c_{m,2} }*\cir<14pt>{}="E_2"*++!D(-2.0){E_{m,2,\hat g_{m}}};
(85,-31)*+{-c_{m,s_{m}} }*\cir<16pt>{}="E_3"*++!D(-2.0){E_{m,s_{m},\hat g_{m}}}; 
(57.5,31)*{\cdot },(62.5,31)*{\cdot },(67.5,31)*{\cdot }, 
(57.5,14)*{\cdot },(62.5,14)*{\cdot },(67.5,14)*{\cdot }, 
(57.5,-14)*{\cdot},(62.5,-14)*{\cdot},(67.5,-14)*{\cdot}, 
(57.5,-31)*{\cdot},(62.5,-31)*{\cdot},(67.5,-31)*{\cdot}, 
(62.5,25)*{\cdot},(62.5,22.5)*{\cdot},(62.5,20)*{\cdot},    
(55.5,2.5)*{\cdot},(55.5,-1)*{\cdot},(55.5,-4.5)*{\cdot}, 
(62.5,-20)*{\cdot},(62.5,-22.5)*{\cdot},(62.5,-25)*{\cdot}, 

\ar @{-} "E" ;"A_1"  
\ar @{-} "E" ;"B_1" 
\ar @{-} "E" ;"D_1"  
\ar @{-} "E" ;"E_1"

\ar @{-} "A_1"; "A_2"   
\ar @{-} "A_2"; (55,31) \ar @{-} (70,31);"A_3"  

\ar @{-} "B_1"; "B_2" 
\ar @{-}"B_2";(55,14) \ar @{-} (70,14);"B_3" 
\ar @/^2mm/@{-}^{\hat g_1} (92,31);(92,14) 

\ar @{-} "D_1" ; "D_2"  
\ar @{-} "D_2";(55,-14) \ar @{-} (70,-14);"D_3" 

\ar @{-} "E_1" ; "E_2" 
\ar @{-} "E_2";(55,-31) \ar @{-} (70,-31);"E_3" 
\ar @/^2mm/@{-}^{\hat g_{m}} (92,-14);(92,-31) 
\endxy
$
\end{center}
\caption{\label{fig:BCIG}}
\end{figure}

For any $\Q$-cycle $B$ on $X$ and any irreducible component $F\subset E$, let $\cf_{F}(B)$ denote the coefficient of $F$ in $B$.
Let $Z^{(i)}=(x_i)_E$.

\begin{thm}[{\cite[4.4]{MO}}]\label{t:resol}
We have the following:
$$
Z^{(i)}=\la{i}{0}E_0+\sum_{w=1}^{m}\sum_{\nu=1}^{s_{w}}
\sum_{\xi=1}^{\hat g_w}\la{i}{w,\nu,\xi}E_{w,\nu,\xi} \ \ 
(1\le i \le m), 
$$
where $\la{i}{0}$ and the sequence $\{\la{i}{w,\nu,\xi}\}$ 
are determined as follows:
\begin{align*}
& \la{i}{0}:=\la{i}{w,0,\xi}:=\lambda_i, \\
& \la{i}{w,s_w+1,\xi}:=\begin{cases}
1 & \text{if $w=i$} \\
0 & \text{if $w\ne i$} , 
\end{cases}\\
& \la{i}{w,\nu-1,\xi}
=\la{i}{w,\nu,\xi}c_{w,\nu}-\la{i}{w,\nu+1,\xi}. \\
\end{align*}
The cycle $Z^{(i)}$ is the smallest one among the cycles $Z>0$ such that $Z$ is anti-nef and $\cf_{E_0}(Z)=\lambda_{i}$
(cf. \cite[2.1]{MO}).
In particular, we have $M_X=Z^{(m)}$, since $\lambda_1 \ge \cdots \ge \lambda_m$.
\end{thm}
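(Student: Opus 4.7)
The plan is to compute $(x_i)_E$ by using that the pulled-back function $\pi^*x_i$ has principal divisor $\di_X(\pi^*x_i) = Z^{(i)} + H_i$, where $H_i$ is the strict transform of $\{x_i=0\}\cap V$. Principal divisors intersect every compact exceptional component trivially, so $Z^{(i)}\cdot F = -H_i\cdot F$ for every irreducible $F\subset E$. Unwinding this on each $E_{w,\nu,\xi}$ reduces the problem to identifying (i) the coefficient of $E_0$ in $Z^{(i)}$ and (ii) the locations at which $H_i$ meets the tips of the arms; all other coefficients are then forced by a linear recurrence derived from the intersection equations.

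For (i), I would exploit the $\C^*$-action with weights $(\lambda_1,\dots,\lambda_m)$ under which all defining polynomials are homogeneous of degree $\ell$: the minimal good resolution is equivariant, the central curve $E_0$ arises from a weighted blow-up at the origin, and $x_i$ vanishes along $E_0$ to order exactly $\lambda_i$, giving $\la{i}{0}=\lambda_i$. For (ii), a chart-by-chart analysis near the end of each arm (analogous to the hypersurface computation of Konno--Nagashima in \cite{K-N}) shows that $H_i$ meets each tip $E_{i,s_i,\xi}$ ($\xi=1,\dots,\hat g_i$) transversally at a single point and is disjoint from $E_{w,s_w,\xi}$ for $w\neq i$. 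Encoding these tip contributions as formal boundary values $\la{i}{w,s_w+1,\xi}\in\{0,1\}$ according to whether $w=i$, the identities $Z^{(i)}\cdot E_{w,\nu,\xi} = -H_i\cdot E_{w,\nu,\xi}$ for $1\le\nu\le s_w$ translate directly into
\begin{equation*}
\la{i}{w,\nu-1,\xi} = c_{w,\nu}\la{i}{w,\nu,\xi} - \la{i}{w,\nu+1,\xi},
\end{equation*}
which together with the boundary data determines every intermediate coefficient.

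For the minimality characterization, first note that $Z^{(i)}$ is anti-nef since $Z^{(i)}\cdot F = -H_i\cdot F\le 0$ for every $F\subset E$. Given any anti-nef $Z>0$ with $\cf_{E_0}(Z)=\lambda_i$, I would run a Laufer-type computation sequence starting from $\lambda_i E_0$: at each step one adjoins an irreducible component against which the current cycle has strictly positive intersection, and the anti-nef hypothesis on $Z$ guarantees this added component already appears in $Z$. Inducting out along each arm, governed by the same recurrence, shows the process terminates precisely at $Z^{(i)}$, hence $Z^{(i)}\le Z$. The identification $M_X = Z^{(m)}$ then follows since $M_X$ is anti-nef and positive with $\cf_{E_0}(M_X)=\lambda_m$: indeed $\lambda_m$ is the minimum vanishing order along $E_0$ among nonzero elements of $\m$ (achieved by $x_m$), so the minimality just proved gives $Z^{(m)}\le M_X\le (x_m)_E = Z^{(m)}$.

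The main obstacle is step (ii), where one must carefully unwind the $\hat g_w$-fold branching produced when the cyclic quotient singularities at each arm tip are resolved, and verify on local charts that the strict transform of $\{x_i=0\}$ behaves as claimed on each branch; once the boundary values are pinned down, the remainder of the argument is formal linear algebra on a chain of $\PP^1$'s.
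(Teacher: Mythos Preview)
The paper does not give its own proof of this theorem: it is quoted (with attribution) from \cite[4.4]{MO}, and the minimality assertion is likewise referred to \cite[2.1]{MO}. There is therefore nothing in the present paper to compare your argument against.

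That said, your outline is a reasonable sketch of how such a result is established. The identification $\la{i}{0}=\lambda_i$ via the quasihomogeneous structure, the recurrence along each arm coming from $Z^{(i)}\cdot E_{w,\nu,\xi}=-H_i\cdot E_{w,\nu,\xi}$, and the Laufer-type computation-sequence argument for minimality are all standard moves in this setting, and your assessment that step~(ii)---the local analysis of the strict transform of $\{x_i=0\}$ at the arm tips---is the only genuinely nontrivial ingredient is accurate. If you want this to stand as an independent proof you must actually carry out that chart computation (the hypersurface case is in \cite{K-N}; the complete intersection case is precisely the content of \cite{MO}), but as a high-level plan the strategy is sound.
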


\begin{thm}[{\cite[5.3]{MO}}]\label{t:K}
We have 
$$
Z_{K_X}=E+\frac{(m-2)l}{\alpha}Z_0-\sum_{w=1}^{m}Z^{(w)},
$$
where $Z_0$ is the anti-nef cycle such that $\cf_{E_0}(Z_0)=\alpha$ and $Z_0(E-E_0)=0$.
\end{thm}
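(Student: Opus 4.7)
The plan is to verify the numerical characterization of $Z_{K_X}$: setting
\[
D:=E+\frac{(m-2)\ell}{\alpha}Z_0-\sum_{w=1}^m Z^{(w)},
\]
we show $(K_X+D)\cdot E_i=0$ for every irreducible component $E_i\subset E$. By adjunction this is equivalent to $D\cdot E_i = E_i^2+2-2g(E_i)$, and since this property characterizes $Z_{K_X}$ uniquely as a $\Q$-cycle, it suffices to check it component by component. The components of $E$ split naturally into three classes according to \figref{fig:BCIG}: the central curve $E_0$, the interior chain components $E_{w,\nu,\xi}$ with $\nu<s_w$, and the end-of-chain components $E_{w,s_w,\xi}$.

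First I would compute $Z^{(w)}\cdot E_i$ for each $(w,i)$. Writing $\di_X(x_w)=Z^{(w)}+H_w$ with $H_w$ the strict transform of $\{x_w=0\}\cap V$ on $X$, the principality of $\di_X(x_w)$ yields $Z^{(w)}\cdot E_i=-H_w\cdot E_i$. The explicit description of the minimal good resolution of a Brieskorn complete intersection shows that $H_w$ is the disjoint union of $\hat g_w$ smooth branches, each meeting $E$ transversally at a single point lying on some end-of-chain curve $E_{w,s_w,\xi}$ and disjoint from every other exceptional component. This is exactly consistent with the recursion and boundary conditions of \thmref{t:resol}: the boundary values $\la{i}{w,s_w+1,\xi}=\delta_{iw}$ together with the chain recursion give $Z^{(i)}\cdot E_{w,s_w,\xi}=-\delta_{iw}$ and $Z^{(i)}\cdot E_j=0$ for every other $E_j$; in particular $Z^{(w)}\cdot E_0=0$ for all $w$.

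Next I would compute $Z_0\cdot E_i$. By definition $Z_0\cdot E_i=0$ for every $E_i\neq E_0$, so only $Z_0\cdot E_0$ remains. The coefficients of $Z_0$ along each chain are determined by the Hirzebruch--Jung recursion with the initial datum $\cf_{E_0}(Z_0)=\alpha$ and vanishing terminal datum past the end of each chain. Unwinding the resulting continued fraction on each of the $m$ chains and using the definitions $\alpha=\alpha_1\cdots\alpha_m$, $\hat g_w=\hat g\,\alpha_w/a_w$ together with the Seifert data attached to the Brieskorn presentation, one obtains the closed-form identity
\[
\frac{(m-2)\ell}{\alpha}\,Z_0\cdot E_0 \;=\; 2-2g-\sum_{w=1}^m \hat g_w.
\]

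Assembling the pieces is then routine. On an interior chain component $E_{w,\nu,\xi}$ with $\nu<s_w$ only $E\cdot E_i=E_i^2+2$ contributes (as $Z^{(w)}\cdot E_i=Z_0\cdot E_i=0$), yielding the required $E_i^2+2-2g(E_i)=E_i^2+2$ since $g(E_i)=0$. On an end-of-chain component $E_{w,s_w,\xi}$ one has $E\cdot E_i=E_i^2+1$ (only one neighbor in $E$), and the missing $+1$ is supplied by $-Z^{(w)}\cdot E_{w,s_w,\xi}=+1$ from Step~1. Finally, on $E_0$ the identity reduces, via $E\cdot E_0=-c_0+\sum_w\hat g_w$ and Step~1, to exactly the closed-form displayed in Step~2. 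The main obstacle is precisely this last identity: it requires translating the combinatorics of the $m$ Hirzebruch--Jung chains into a single arithmetic relation and recognizing its right-hand side as the Riemann--Hurwitz expression for $2-2g=\chi(E_0)$ associated with the $\hat g$-sheeted branched covering coming from the Seifert fibration of the link of $\V$.
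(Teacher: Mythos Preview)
The paper does not prove this theorem; it is quoted from \cite[5.3]{MO} without argument. Your strategy---verify the defining numerical condition $(K_X+D)\cdot E_i=0$ component by component via adjunction---is the natural one and is almost certainly how the original source proceeds as well. The chain computations you give are correct and follow directly from the recursion in \thmref{t:resol}.

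One small imprecision: your assertion that $Z^{(w)}\cdot E_0=0$ for all $w$ (equivalently, that $H_w$ is disjoint from $E_0$) tacitly assumes $\alpha_w>1$ for every $w$. When $\alpha_w=1$ the $w$-th arm is empty and $H_w$ meets $E_0$ transversally in $\hat g_w$ points, so $Z^{(w)}\cdot E_0=-\hat g_w$. However, in that same case your formula $E\cdot E_0=-c_0+\sum_w\hat g_w$ overcounts by exactly $\hat g_w$, and the two discrepancies cancel; the displayed identity you reduce to on $E_0$ is therefore correct as stated.

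That identity is where the genuine gap lies. You have correctly isolated the crux---expressing $Z_0\cdot E_0$ via the Hirzebruch--Jung continued fractions of the arms and matching it against the Riemann--Hurwitz formula for the $\hat g$-sheeted branched cover $E_0\to\PP^1$---but you have not carried it out. Completing the argument requires an explicit formula for $Z_0\cdot E_0$ in terms of $c_0$ and the Seifert data $(\alpha_w,\beta_w)$ of each arm, together with the known genus formula for $E_0$ in the Brieskorn complete intersection case; this is precisely the arithmetic content of \cite[\S 5]{MO}, and without it your proposal remains an outline rather than a proof.
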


\begin{thm}[{\cite[5.1, 5.2, 5.4]{MO}}]\label{t:pf}
If $\lambda_m\ge \alpha$, then $Z_f=Z_0$ and 
$$
p_f(V,p)= \frac{1}{2}\alpha\left\{(m-2)\hat g-\frac{(\alpha-1)\hat g}{l}-\sum_{w=1}^{m}\frac{\hat g_w}{\alpha _w}\right\}+1.
$$
If $\lambda_m\le \alpha $, then $Z_f=M_X$ and 
$$
p_f(V,p)=\frac{1}{2}\lambda_m\left\{(m-2)\hat g
-\frac{(2\ce{\lambda_m/\alpha _{m}}-1)\hat g_{m}}{\lambda_m}
-\sum_{w=1}^{m-1}\frac{\hat g_w}{\alpha _w}\right\}+1.
$$
\end{thm}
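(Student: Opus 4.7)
The proof naturally separates into two tasks: (i) identifying the fundamental cycle $Z_f$ explicitly as either $Z_0$ or $M_X=Z^{(m)}$ according to whether $\lambda_m\ge\alpha$ or $\lambda_m\le\alpha$, and (ii) computing $p_f\V=\tfrac12 Z_f\cdot(Z_f+K_X)+1$ by intersection theory, using the canonical cycle formula of Theorem \ref{t:K}.

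For (i), I would exploit the linear-chain structure of the branches in Figure \ref{fig:BCIG}. Each branch $E_{w,1,\xi},\dots,E_{w,s_w,\xi}$ resolves a cyclic quotient singularity of index $\alpha_w$, and once one fixes $c:=\cf_{E_0}(Z)$ along with an ``external'' boundary value beyond $E_{w,s_w,\xi}$, the interior coefficients are forced by the nef-equality recursion $n_{\nu-1}-c_{w,\nu}n_\nu+n_{\nu+1}=0$, with integrality governed by divisibility constraints involving $\alpha_w$. The cycle $Z_0$ is then characterized as the unique anti-nef cycle with all external boundary values equal to $0$ and minimal positive $\cf_{E_0}$, which turns out to be $\alpha$. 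The cycles $Z^{(w)}$ of Theorem \ref{t:resol} are characterized by having external value $1$ on branch $w$ and $0$ elsewhere, with $\cf_{E_0}(Z^{(w)})=\lambda_w$; since $\lambda_m=\min_w\lambda_w$, the smallest cycle of this second type is $M_X=Z^{(m)}$. A branch-by-branch minimum principle then shows that every positive anti-nef cycle dominates whichever of $Z_0$ and $M_X$ has the smaller coefficient at $E_0$, which pins down $Z_f$ and gives the case distinction.

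For (ii), I would use $K_X\cdot E_i=-Z_{K_X}\cdot E_i$ (from the definition of $Z_{K_X}$) to rewrite $Z_f\cdot(Z_f+K_X)=Z_f\cdot(Z_f-Z_{K_X})$. When $Z_f=Z_0$, the orthogonality $Z_0\cdot E_i=0$ for $i\ne 0$ collapses the pairing to $Z_0\cdot D=\cf_{E_0}(D)\cdot(Z_0\cdot E_0)$ for every cycle $D$. Substituting $\cf_{E_0}(Z_{K_X})=1+(m-2)\ell-\sum_w\lambda_w$ from Theorem \ref{t:K}, together with the expression for $Z_0\cdot E_0$ obtained from the chain data, and applying the identities $\lambda_w=\ell/a_w$ and $\hat g_w=\hat g\alpha_w/a_w$, yields the first displayed formula after simplification. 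The case $Z_f=M_X=Z^{(m)}$ is analogous, but now $Z^{(m)}$ fails to be orthogonal at the endpoint $E_{m,s_m,\xi}$ of branch $m$, so the pairing receives an additional contribution there; this is precisely what produces the correction term $(2\ce{\lambda_m/\alpha_m}-1)\hat g_m/\lambda_m$ in the second formula.

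The main obstacle is the minimality step in (i): ruling out ``hybrid'' anti-nef cycles whose $\cf_{E_0}$ is strictly less than $\min(\alpha,\lambda_m)$ but which carry partial endpoint values on several branches simultaneously. This requires understanding the lattice of admissible endpoint configurations on the cyclic quotient chains, and is where the pairwise coprimality of the $\alpha_w$ (a standard consequence of the isolatedness encoded in the matrix \eqref{eq:mat}) enters decisively. Once minimality is established, the intersection-theoretic derivation of the $p_f$ formulas, though lengthy, is mechanical book-keeping with the chain coefficients $\la{i}{w,\nu,\xi}$ supplied by Theorem \ref{t:resol}.
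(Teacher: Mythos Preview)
This theorem is not proved in the present paper at all: it is quoted verbatim from \cite[5.1, 5.2, 5.4]{MO} and used as a black box in \S\ref{ss:ellBCI}. There is therefore no ``paper's own proof'' to compare your proposal against here; the argument lives entirely in the cited source.

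That said, your outline is a faithful reconstruction of how the result is actually established in \cite{MO}. The identification of $Z_f$ via the minimality characterizations of $Z_0$ and $Z^{(m)}$ (cf.\ \thmref{t:resol}) and the case split on $\lambda_m\lessgtr\alpha$ is exactly the content of \cite[5.1, 5.2]{MO}, and your step (ii)---computing $p_a(Z_f)$ by pairing $Z_f$ against $Z_f-Z_{K_X}$ using \thmref{t:K} and the orthogonality of $Z_0$ to the branch curves---matches \cite[5.4]{MO}. Your identification $\cf_{E_0}(Z_{K_X})=1+(m-2)\ell-\sum_w\lambda_w$ is correct, and with $Z_0\cdot E_0=-\alpha\hat g/\ell$ the first displayed formula falls out as you describe. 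The ``obstacle'' you flag in (i)---ruling out hybrid anti-nef cycles with small central coefficient---is handled in \cite{MO} precisely via the minimality statement recorded at the end of \thmref{t:resol}, together with the analogous minimality of $Z_0$ among anti-nef cycles orthogonal to $E-E_0$; so your instinct about where the work lies is right, but the pairwise-coprimality argument you anticipate is already packaged into those cited lemmas rather than redone here.
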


\subsection{The normal reduction numbers}

Since $M_X=(x_m)_E$ by \thmref{t:resol}, $\cO_X(-M_X)$ has no fixed components; however, it is not generated in general.

Let $H=\di_X(x_m)-M_X$.
Then $E+H$ is simple normal crossing and the set of the base points of the linear system $|\cO_X(-M)|$ is 
an empty set or $\{t_1, \dots, t_{\hat g _{m}}\}$, where $\{t_{\xi}\}=E_{m,s_{m},\xi}\cap H$ (see \thmref{t:resol}).
Let us look in detail at a point.
Let $x,y$ be the local coordinates at $t_{\xi}\in X$ such that $E=\{x=0\}$ and $H=\{y=0\}$.
We write $\eta_i=\la{i}{m,s_{m},\xi}$ and $\delta=\eta_{m-1}-\eta_{m}$.
Then $\delta \ge 0$ and $\m\cO_{X,t_{\xi}}=(x_{m-1},x_m)=(x^{\eta_{m}}y,x^{\eta_{m-1}})=x^{\eta_{m}}(y,x^{\delta})$.

\begin{prop}[{\cite[6.4]{MO}}]
The following conditions are equivalent:
\begin{enumerate}
\item $\delta=0$
\item The base points of the linear system $|\cO_X(-M)|$ on $E$ is empty.
\end{enumerate}
If $\delta>0$, each base point can be resolved by a succession of 
$\delta$ blowing-ups at the intersection of the exceptional set and the proper transform of $H$.
\end{prop}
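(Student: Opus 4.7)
The plan is to argue purely locally at the (potential) base point $t_\xi$, using the normal form $\m\cO_{X,t_\xi}=(x_{m-1},x_m)=x^{\eta_m}(y,x^\delta)$ already recorded in the excerpt.

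First I would identify $\cO_X(-M_X)$ locally at $t_\xi$. Since $M_X = Z^{(m)} = (x_m)_E$ has coefficient $\eta_m$ along $E_{m,s_m,\xi}$ by \thmref{t:resol}, and no other exceptional component passes through $t_\xi$ (because $E+H$ is simple normal crossing), the stalk is the principal ideal $(x^{\eta_m})$. Because $\pi_*\cO_X(-M_X)=\m$, the linear system $|\cO_X(-M_X)|$ agrees with $\m$, so its base points are exactly the points where $\m\cO_X \ne \cO_X(-M_X)$. At $t_\xi$ this reads
\[
x^{\eta_m}(y, x^\delta) = (x^{\eta_m}),
\]
equivalently $(y, x^\delta) = \cO_{X, t_\xi}$, which holds iff $\delta = 0$. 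This proves (1)$\Leftrightarrow$(2).

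For the resolution statement with $\delta > 0$, I would blow up $X$ at $t_\xi$ and inspect the two standard affine charts of $X' \to X$. In the chart with coordinates $(x_1, y_1)$ (where $x = x_1$, $y = x_1 y_1$), the pulled-back generators $x^{\eta_m}y$ and $x^{\eta_{m-1}}$ become $x_1^{\eta_m+1}y_1$ and $x_1^{\eta_{m-1}}$ respectively; the exceptional curve $E'$ is $\{x_1 = 0\}$ and the proper transform of $H$ is $\{y_1 = 0\}$, meeting at the origin. At that new point the same local picture reappears with $(\eta_m, \eta_{m-1})$ replaced by $(\eta_m+1, \eta_{m-1})$, so the governing parameter $\delta$ drops to $\delta - 1$. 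In the other chart $(x_2, y_2)$ (where $x = x_2 y_2$, $y = y_2$), both generators are multiples of $x_2^{\eta_m}y_2^{\eta_m+1}$, so the ideal is principal there and no base point remains. Iterating the first chart analysis $\delta$ times forces $\delta$ to $0$, at which stage (1)$\Leftrightarrow$(2) gives the empty base locus.

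The only input beyond \thmref{t:resol} is the local form $\m\cO_{X,t_\xi}=(x_{m-1},x_m)$ quoted in the excerpt: the chain $\eta_1 \ge \cdots \ge \eta_m$ obtained from \thmref{t:resol} already puts $x_1, \ldots, x_{m-2}$ inside $(x^{\eta_{m-1}})$, so the nontrivial point—and the main obstacle—is the assertion that the non-exceptional part of $\di_X(x_{m-1})$ does not pass through $t_\xi$, which would be extracted from a direct analysis of the defining matrix \eqref{eq:mat}. Once that normal form is granted, the rest is the elementary toric computation above of blowing up the monomial ideal $(y, x^\delta)$.
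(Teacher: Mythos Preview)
The paper does not prove this proposition: it is quoted verbatim from \cite[6.4]{MO} with no accompanying argument, so there is no in-paper proof to compare against. What the paper does supply is exactly the local normal form $\m\cO_{X,t_\xi}=x^{\eta_m}(y,x^\delta)$ in the paragraph preceding the statement, and your proof correctly observes that everything follows from that.

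Your argument is sound. The equivalence (1)$\Leftrightarrow$(2) is immediate once one identifies $\cO_X(-M_X)_{t_\xi}=(x^{\eta_m})$, and your blow-up computation is the standard resolution of the monomial ideal $(y,x^\delta)$: in the chart $x=x_1$, $y=x_1y_1$ the ideal becomes $x_1^{\eta_m+1}(y_1,x_1^{\delta-1})$, while in the chart $x=x_2y_2$, $y=y_2$ one checks (using $\delta\ge 1$) that $x_2^{\eta_{m-1}}y_2^{\eta_{m-1}}\in(x_2^{\eta_m}y_2^{\eta_m+1})$, so the ideal is principal there. One should also note, for completeness, that at points of the new exceptional curve in the first chart away from the origin (where $y_1$ is a unit) the ideal is again principal; this is implicit in your ``no base point remains'' claim for the second chart but deserves a word for the first. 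Your closing paragraph correctly isolates the one substantive input you are taking on faith: that $x_{m-1}$ is a local unit times $x^{\eta_{m-1}}$ at $t_\xi$, which is what the cited reference \cite{MO} establishes from the structure of the matrix \eqref{eq:mat}.
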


Let $\phi\: Y\to X$ be the minimal morphism such that $\m\cO_Y$ is invertible and let $F=\phi^{-1}(E)$.
Let $W_i=(x_i)_F$ ($i=1, \dots, m$), and let $M_Y$ denote the maximal ideal cycle on $Y$ and $H_Y$ the proper transform of $H$ on $Y$.
Then 
\begin{equation}\label{eq:Wi}
W_i=\phi^*Z^{(i)} \ \ \text{for $i\ne m$, } \ \
W_m=M_Y=\phi^*Z^{(m)}+K_{Y/X},
\end{equation}
where $K_{Y/X}=K_Y-\phi^*K_X$. 
Now, $\m$ is represented by $M_Y$ and $\ol{\m^n}=I_{nM_Y}$.
Fix an irreducible component $F_{\xi}\subset F$ intersecting $H_Y$.
For any cycle $W$ on $Y$, we write $\gamma(W)=\cf_{F_{\xi}}(W)$.
Note that $\gamma(M_Y)$ is independent of the choice of a component intersecting $H_Y$ (see \thmref{t:resol}) and 
\begin{equation}
\label{eq:eta}
\gamma(W_i)=\eta_i \ \ \text{for $i\ne m$}, \ \ 
\gamma(W_m)=\gamma(M_Y)=\eta_m+\delta=\eta_{m-1}.
\end{equation}

\begin{lem}\label{l:bar}
Let $(u_1, \dots, u_m)\in (\Z_{\ge 0})^m$.
For any  positive integer $n$, 
\[
\prod_{i=1}^m x_i^{u_i}\in \ol{\m^n} \quad \text{if and only if}  \quad
\sum_{i=1}^{m-2}\frac{u_i}{a_i}\ge \frac{n-(u_{m-1}+u_m)}{a_{m-1}}.
\]
\end{lem}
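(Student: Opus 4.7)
The plan is to translate the membership via $\ol{\m^n}=\pi_*\cO_Y(-nM_Y)_p$ into the divisor inequality $\di_Y(\prod x_i^{u_i})\ge nM_Y$ on $Y$. Since the non-exceptional part of $\di_Y(\prod x_i^{u_i})$ is automatically effective, this reduces to the cycle inequality $\sum_i u_i W_i\ge nM_Y$, which I will test component by component.

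The heart of the proof is the single computation at $F_\delta$ (or at $E_{m,s_m,\xi}$ when $\delta=0$). From the local toric resolution of $(y,x^\delta)$ at $t_\xi$, the new exceptional divisor $F_k$ satisfies $v_{F_k}(x_i)=\eta_i$ for $i\le m-1$ and $v_{F_k}(x_m)=\eta_m+k$; combined with $M_Y=\phi^*Z^{(m)}+K_{Y/X}$ in \eqref{eq:Wi} and $K_{Y/X}=\sum_k kF_k$, this yields $\cf_{F_\delta}(M_Y)=\eta_m+\delta=\eta_{m-1}$. Because for $i\ne m$ the sequences $\{\la{i}{m,\nu,\xi}\}_\nu$ satisfy the same homogeneous recurrence of \thmref{t:resol} with common boundary $\la{i}{m,s_m+1,\xi}=0$, they are proportional to each other, so $\eta_i=\lambda_i\mu_{m,s_m}$ for $i\le m-1$ where $\mu_{m,\nu}$ is the normalized solution. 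The $F_\delta$-inequality then simplifies to $\sum_{i=1}^{m-1}u_i\lambda_i+u_m\lambda_{m-1}\ge n\lambda_{m-1}$, and dividing by $\lambda_{m-1}$ using $\lambda_i/\lambda_{m-1}=a_{m-1}/a_i$ gives precisely the stated inequality. This yields the ``only if'' direction.

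For the ``if'' direction, I must check the analogous inequality at every other exceptional component. For each arm of the star graph of $E$ (with the $m$-arm extended by $F_1,\ldots,F_\delta$), set $A_\nu:=\sum_i u_i v_{G_\nu}(x_i)-n\,\cf_{G_\nu}(M_Y)$ as $G_\nu$ runs along the arm. Then $A_\nu$ satisfies the second-order adjacency recurrence of the arm's intersection matrix. Because this matrix is a principal submatrix of the negative-definite intersection matrix of the full exceptional set of $Y$, it is a Stieltjes matrix up to sign, so its inverse has nonnegative entries; hence each interior $A_\nu$ is a nonnegative linear combination of the two boundary values. The inner boundary is $A_{E_0}=\sum_i u_i\lambda_i-n\lambda_m$, which is $\ge 0$ under the lemma's condition (checking $n\le u_m$ and $n>u_m$ separately via $\lambda_{m-1}\ge\lambda_m$); the outer boundary is $u_w\ge 0$ on arms $w\ne m$ and $u_m\ge 0$ past $F_\delta$, where $A_{F_\delta}\ge 0$ is already the lemma's condition. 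All boundary values being nonnegative forces $A_\nu\ge 0$ throughout.

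The main obstacle is the Stieltjes (discrete maximum principle) argument: one must verify that the adjacency recurrence extends correctly across the junction with the new $F_k$'s (with self-intersections $-c_{m,s_m}-1,-2,\ldots,-2,-1$) and that each arm's principal intersection submatrix on $Y$ remains negative definite. Once these combinatorial checks are in place, the monotonicity of the linear system delivers the required implication, and the two directions combine to give the stated equivalence.
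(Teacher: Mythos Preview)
Your proposal is correct in outline, and once the Stieltjes bookkeeping along each arm is filled in (including the extended $m$-arm on $Y$), it delivers the equivalence. The route, however, is genuinely different from the paper's.

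The paper bypasses the arm-by-arm maximum-principle computation entirely with a three-line global argument. Writing $W-nM_Y=D_1-D_2$ with $D_1,D_2\ge 0$ having no common component, the hypothesis $\gamma(W)\ge\gamma(nM_Y)$ forces $D_2$ to avoid every component of $F$ meeting $H_Y$; since $M_Y=(x_m)_F$ and $(\di_Y x_m)\cdot E_j=0$ for every exceptional $E_j$, one gets $M_YD_2=0$. Then
\[
0\le D_1D_2-D_2^2=(W-nM_Y)D_2+nM_YD_2=WD_2\le 0,
\]
the last inequality because $W=\sum_i u_iW_i$ is a nonnegative combination of anti-nef cycles $W_i=(x_i)_F$, hence itself anti-nef. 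If $D_2\ne 0$, negative definiteness would make the left side strictly positive, a contradiction; thus $D_2=0$ and $W\ge nM_Y$. So the single inequality at the $H_Y$-components already implies the full cycle inequality.

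What each approach buys: the paper's anti-nef trick is shorter and uses only the abstract facts that each $W_i$ is anti-nef and that $M_Y$ is numerically trivial off the $H_Y$-components; it would work unchanged for any star-shaped (or even more general) configuration. Your Stieltjes argument is more explicit and self-contained, computing the sign of $\cf_{E_j}(W-nM_Y)$ at every $E_j$, but it requires tracking the modified self-intersections $-c_{m,s_m}-1,-2,\dots,-2,-1$ along the blown-up $m$-arm and verifying the boundary data case by case (including your separate check that $A_{E_0}\ge 0$, splitting into $n\le u_m$ and $n>u_m$). Both arrive at the same numerical reduction $\sum_i u_i\eta_i+u_m\eta_{m-1}\ge n\eta_{m-1}$, after which the identity $\eta_i=\lambda_i/\alpha_m$ for $i\le m-1$ finishes the proof identically.
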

\begin{proof}
We have  $\left(\prod_{i=1}^m x_i^{u_i}\right)_F=W:=\sum_{i=1}^m {u_i}W_i$.
First we show that $\prod_{i=1}^m x_i^{u_i}\in \ol{\m^n}$ if and only if $\gamma(W)\ge \gamma(nM_Y)$.
Clearly, if  $W \ge nM_Y$, then  $\gamma(W)\ge \gamma(nM_Y)$. 
 So we show the converse.
Let $W-nM_Y=D_1-D_2$, where $D_1$ and $D_2$ are effective cycles without common components. By the assumption, $D_2$ has no components of $F$ intersecting $H_Y$. Thus $M_YD_2=0$.
Then $0\le D_1D_2-D_2^2=WD_2 \le 0$. Hence $D_2=0$.
We have proved the claim.

We have the following (see \cite[Lemma 1.2 (4)]{K-N} for the first equality):
\begin{equation}\label{eq:eta2}
\eta_i=\lambda_i/\alpha_m =\ell/a_i\alpha_m \quad (1\le i \le m-1).
\end{equation}
Then  we have
\begin{align*}
\gamma(W)- \gamma(nM_Y) &=\sum_{i=1}^{m-1}u_i\eta_i+u_{m}\eta_{m-1}
-n\eta_{m-1} \\
&=\frac{l}{\alpha_m}
\left(
\sum_{i=1}^{m-2}\frac{u_i}{a_i}+\frac{u_{m-1}+u_{m}-n}{a_{m-1}}
\right).
\end{align*}
This implies the assertion.
\end{proof}

Let $P\subset A:=\C[x_1, \dots, x_m]$ denote the ideal generated by the polynomials $\defset{x_i^{a_i}+p_ix_{m-1}^{a_{m-1}}+q_ix_m^{a_m}}{i=1, \dots, m-2}$ defining $V\subset \C^m$. For simplicity, let $P$ also denote the ideal in $\C\{x_1, \dots, x_m\}$ generated by these polynomials;  so $\cO_{V,p}=\C\{x_1, \dots, x_m\}/P$. 
We easily see the following (cf. \cite[Theorem 3.1]{nw-CIuac}).

\begin{lem}\label{l:red}
For any $1 \le i \le m$, 
the quotient ring $A/(P+(x_i))$ is reduced.
\end{lem}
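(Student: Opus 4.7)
The plan is to give an explicit presentation of $B := A/(P + (x_i))$ using the matrix structure \eqref{eq:mat} and then to show that after inverting a single coordinate $B$ becomes \'etale over a punctured affine line; combined with the Cohen--Macaulayness of $B$, this forces reducedness. The only non-routine ingredient is showing $B$ is equidimensional of dimension one, which is where the Brieskorn structure of $V$ really enters.

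First, I write down the presentation. For $i \in \{1, \ldots, m-2\}$, the $i$-th generator $x_i^{a_i} + p_i x_{m-1}^{a_{m-1}} + q_i x_m^{a_m}$ of $P$ becomes, modulo $x_i$, the relation $p_i x_{m-1}^{a_{m-1}} + q_i x_m^{a_m} = 0$; since $q_i \neq 0$ I use it to eliminate $x_m^{a_m}$ in the remaining generators. After elimination,
\[
B \;\cong\; \C[x_1, \ldots, \hat{x}_i, \ldots, x_m] / J,
\]
where $J$ is generated by $x_m^{a_m} + (p_i/q_i)\, x_{m-1}^{a_{m-1}}$ together with the $m-3$ relations $x_k^{a_k} - \beta_k\, x_{m-1}^{a_{m-1}}$ for $k \in \{1, \ldots, m-2\} \setminus \{i\}$, where $\beta_k = (p_i q_k - p_k q_i)/q_i \neq 0$ by the hypothesis on the matrix. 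The cases $i = m-1$ and $i = m$ are even easier: no elimination is needed, and the $m-2$ defining relations already take the shape $x_k^{a_k} + (\text{nonzero const})\cdot t^a$ with $t = x_m$ or $t = x_{m-1}$, respectively.

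Next I check that $B$ is Cohen--Macaulay of pure dimension one. The coordinate ring $A/P$ of $V$ is a two-dimensional integral domain (the Brieskorn complete intersection $V$ is irreducible and normal), so $x_i$ is a non-zero-divisor in it, and by Krull's Hauptidealsatz every minimal prime of $B$ corresponds to a one-dimensional component of $V \cap \{x_i = 0\}$. Because the $m-2$ defining relations in the $(m-1)$-variable polynomial ring cut out something of pure codimension $m-2$, they form a regular sequence, so $B$ is a complete intersection, in particular Cohen--Macaulay; thus every associated prime is minimal and reducedness of $B$ is equivalent to reducedness at every minimal prime.

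Finally, set $t := x_{m-1}$ (replaced by $x_m$ when $i = m-1$). Every defining relation of $B$ has the shape $y^e = c\, t^a$ with $c \neq 0$ for some coordinate $y \neq t$, so on the locus $\{t \neq 0\}$ each such $y$ is forced to be nonzero and the partial derivative $e\, y^{e-1}$ is invertible; hence $B[t^{-1}]$ is \'etale over $\C[t, t^{-1}]$, in particular regular and therefore reduced. On the other hand, the closed subset $\{t = 0\} \subset \Spec B$ reduces to the origin (setting $t = 0$ forces every $y^e = 0$ and hence $y = 0$), which cannot carry a minimal prime of the equidimensional one-dimensional ring $B$. Therefore every minimal prime of $B$ survives after inverting $t$, where the ring is reduced, and we conclude that $B$ itself has no nilpotents.
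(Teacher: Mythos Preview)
Your proof is correct and complete. The paper itself does not actually prove this lemma: it simply states that one ``easily sees'' the claim and points to \cite[Theorem~3.1]{nw-CIuac}. Your argument supplies a self-contained proof: after writing $B$ explicitly using the matrix \eqref{eq:mat}, you use that $B$ is a one-dimensional complete intersection (hence Cohen--Macaulay and without embedded primes), and then verify generic reducedness by inverting the distinguished coordinate $t$ and checking the Jacobian criterion, which is immediate from the diagonal shape of the relations $y^e = c\,t^a$. The final step---no minimal prime can be supported on the single closed point $\{t=0\}$---is exactly what is needed to pass from generic reducedness to reducedness. One small stylistic remark: the appeal to Krull's Hauptidealsatz in your second paragraph is not really the point; what you need (and use) is that $A/P$ is a two-dimensional Cohen--Macaulay domain and $x_i$ is a nonzero nonunit, so $B=(A/P)/(x_i)$ is Cohen--Macaulay of pure dimension one.
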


\begin{prop}\label{p:mmon}
For $n\in \Z_{\ge 0}$, let $I_n\subset \cO_{V,p}$ be an ideal generated by monomials $\prod_{i=1}^m x_i^{u_i}$ such that 
\[
\sum_{i=1}^{m-2}\frac{u_i}{a_i}\ge \frac{(n/\eta_{m-1})-(u_{m-1}+u_m)}{a_{m-1}}.
\]
Then $ I_{n\eta_{m-1}}=\ol{\m^n}$ for $n\in \Z_{\ge 0}$.
In particular, $\ol{\m^n}$ is generated by monomials.
\end{prop}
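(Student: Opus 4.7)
The plan is to verify the hypotheses of \lemref{l:filt} for the filtration $\{I_n\}$, with $\tilde C:=M_Y/\eta_{m-1}$ as the $\Q$-cycle on $Y$ and $C$ the reduced sum of the $\hat g_m$ components of $F$ meeting $H_Y$. By \eqref{eq:eta}, each such component has coefficient $\eta_{m-1}$ in $M_Y$, so $\cf_{F'}(\tilde C)=1$ for $F'\le C$. The cycle $\tilde C$ is anti-nef because $\cO_Y(-M_Y)=\m\cO_Y$ is globally generated; moreover $M_Y\cdot F'=0$ for $F'\not\le C$, since the section $x_m$ of $\cO_Y(-M_Y)$ has zero-divisor $H_Y$ disjoint from $F'$. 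Taking $d=\eta_{m-1}$ and $h=x_m\in I_{\eta_{m-1}}$, we have $(x_m)_F=W_m=M_Y=d\tilde C$, an integral cycle. Once \lemref{l:filt} applies, specializing its conclusion $I_n=\pi_*\cO_Y(-n\tilde C)_p$ at $n=n'\eta_{m-1}$ yields $I_{n'\eta_{m-1}}=\pi_*\cO_Y(-n'M_Y)_p=\overline{\m^{n'}}$, and the in-particular statement is immediate from the monomial generation of the $I_n$.

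The pointwise condition $(h)_F\ge nC$ for $h\in I_n$ is routine: a monomial generator $\prod_i x_i^{u_i}\in I_n$ satisfies $\sum_i u_i\gamma(W_i)\ge n$ by definition, and by the symmetry of the recursion in \thmref{t:resol} among the $\hat g_m$ branches of the $m$-th leg (see \figref{fig:BCIG}), $\cf_{F'}(W_i)$ equals $\gamma(W_i)$ on every $F'\le C$. Linear combinations then inherit $(h)_F\ge nC$ via the discrete valuations along the components of $C$.

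The principal step is to verify $\gr:=\bigoplus_{n\ge 0}I_n/I_{n+1}$ is reduced. Extend the filtration to $A:=\C\{x_1,\dots,x_m\}$ by setting $\deg(x_i)=\eta_i$ for $i<m$ and $\deg(x_m)=\eta_{m-1}$, so that $I_n=(\tilde I_n+P)/P$ with $\tilde I_n\subset A$ the ideal of elements of weight-valuation $\ge n$. For each generator $f_i=x_i^{a_i}+p_ix_{m-1}^{a_{m-1}}+q_ix_m^{a_m}$ of $P$, the three terms have weights $a_i\eta_i=a_{m-1}\eta_{m-1}=\ell/\alpha_m$ and $a_m\eta_{m-1}$, the latter being $\ge\ell/\alpha_m$ with equality iff $a_{m-1}=a_m$. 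Hence $\ini(f_i)=x_i^{a_i}+p_ix_{m-1}^{a_{m-1}}$ when $a_{m-1}<a_m$, and $\ini(f_i)=f_i$ when $a_{m-1}=a_m$. In both cases $\ini(f_1),\dots,\ini(f_{m-2})$ is a regular sequence in $A$ of codimension $m-2$, matching $P$; a standard Koszul-complex comparison then yields that $f_1,\dots,f_{m-2}$ is a standard basis of $P$, so $\gr\cong A/(\ini(f_1),\dots,\ini(f_{m-2}))$.

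When $a_{m-1}=a_m$ this quotient is the normal domain $\cO_{V,p}$; when $a_{m-1}<a_m$ it equals $(A/(P+(x_m)))[x_m]$, which is reduced by \lemref{l:red} with $i=m$. Either way $\gr$ is reduced, \lemref{l:filt} applies, and the proposition follows. The main obstacle is this reducedness step, which via the explicit initial-form computation reduces to \lemref{l:red}.
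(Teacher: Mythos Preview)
Your proof is correct and follows essentially the same route as the paper: both verify the hypotheses of \lemref{l:filt} with $C$ the reduced sum of the $\hat g_m$ end-components meeting $H_Y$ and $\tilde C=M_Y/\eta_{m-1}$, and both establish reducedness of the associated graded by computing the initial forms of the defining equations with respect to the weight vector $(\eta_1,\dots,\eta_{m-1},\eta_{m-1})$ and invoking \lemref{l:red} when $a_{m-1}<a_m$. Your write-up is slightly more explicit in checking the anti-nef and $\tilde C F'=0$ conditions and in justifying why the initial forms generate $\ini(P)$ (via the regular-sequence/Koszul argument), whereas the paper simply cites \cite[Theorem~2.6]{nw-CIuac} for the isomorphism $G\cong A/(\ini(f_i))$; but the substance is the same.
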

\begin{proof}
First we show that $G:=\bigoplus_{n\ge 0} I_n/ I_{n+1}$ is reduced.
It follows from \eqref{eq:eta} and \eqref{eq:eta2} that the inequality is equivalent to the following (cf. the proof of \lemref{l:bar}):
\begin{equation}\label{eq:In}
\gamma\left(\left(\prod_{i=1}^m x_i^{u_i}\right)_F\right)
=\sum_{i=1}^{m-1}u_i\eta_i+u_m\eta_{m-1}\ge n.
\end{equation}
Therefore the filtration $\{I_n\}_{n\in \Z_{\ge 0}}$ is induced from the weight filtration of the power series ring $\C\{x_1, \dots, x_m\}$ with weight vector 
$(\eta_1, ... , \eta_{m-1}, \eta_{m-1})\in \Z^m$.
Let $I\subset A=\C[x_1, \dots, x_m]$ denote the ideal generated by the leading form, with respect to these weights, of the polynomials $\defset{x_i^{a_i}+p_ix_{m-1}^{a_{m-1}}+q_ix_m^{a_m}}{i=1, \dots, m-2}$.
Then $A/I$ is complete intersection and isomorphic to $G$ (cf. the proof of \cite[Theorem 2.6]{nw-CIuac}).
If $a_{m-1} = a_m$, then $G=A/P$. 
If $a_{m-1} < a_m$, then $G\cong (A/P+(x_m))[x_m]$, and thus $G$ is reduced by \lemref{l:red}.

Let $C=\sum_{\xi=1}^{\hat g_m}F_{\xi}$, the sum of the irreducible components of $F$ intersecting $H_Y$.
From \eqref{eq:In}, every $h\in I_n$ satisfies $(h)_F\ge nC$.
Now we can apply \lemref{l:filt}.
Since $\eta_{m-1}\t C=M_Y$, we obtain that 
 $I_{n\eta_{m-1}}=\ol{\m^n}$.
\end{proof}

Let $Q=(x_{m-1},x_m)\subset \cO_{V,p}$. Then $x_i^{a_i}\in Q$ for every $i$, and thus $Q$ is a minimal reduction of $\m$ (cf. \cite[8.3.6]{HS-book}).

\begin{thm}\label{t:nr}
We have the following.
\begin{enumerate}
\item
\[
\nr (\m)=\brr(\m)=\fl{a_{m-1}\sum_{i=1}^{m-2}\frac{a_i-1}{a_i}}.
\]

\item  The image of the monomials $\prod_{i=1}^{m-2} x_i^{u_i}$
such that 
\[
\sum_{i=1}^{m-2}\frac{u_i}{a_i} \ge \frac{n+1}{a_{m-1}} \quad \text{and} \quad 
0\le u_i \le a_i-1 \quad (i=1, \dots, m-2)
\]
in  the vector space $\ol{\m^{n+1}}/Q\ol{\m^n}$ form a basis.
In particular, $\dim_{\C}(\ol{\m^{n+1}}/Q\ol{\m^n})$ is a non-increasing function of $n$.
\end{enumerate}\end{thm}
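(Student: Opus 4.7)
The plan is to prove (2) first, from which (1) follows by counting together with the monotonicity built into (2). The key preliminary observation is that $\cO_{V,p}/Q\cong\C[x_1,\dots,x_{m-2}]/(x_1^{a_1},\dots,x_{m-2}^{a_{m-2}})$: modulo $Q=(x_{m-1},x_m)$ the defining equations collapse to $x_i^{a_i}$, so the monomials $\prod_{i=1}^{m-2}x_i^{u_i}$ with $0\le u_i\le a_i-1$ form a $\C$-basis of this Artinian quotient.

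For the spanning assertion in (2), I would start from \proref{p:mmon}: $\ol{\m^{n+1}}$ is generated as an ideal by monomials $\prod x_i^{u_i}$ satisfying $\sum_{i=1}^{m-2}u_i/a_i\ge (n+1-u_{m-1}-u_m)/a_{m-1}$. Two reductions modulo $Q\ol{\m^n}$ then suffice. First, if $u_{m-1}\ge 1$, dividing off $x_{m-1}$ yields a monomial whose inequality is exactly the analogous condition for $\ol{\m^n}$, so the original monomial lies in $x_{m-1}\ol{\m^n}\subset Q\ol{\m^n}$; the case $u_m\ge 1$ is symmetric. Second, if $u_{m-1}=u_m=0$ but some $u_k\ge a_k$ with $k\le m-2$, the defining relation $x_k^{a_k}=-p_kx_{m-1}^{a_{m-1}}-q_kx_m^{a_m}$ rewrites the monomial as a $\C$-combination of two monomials each still in $\ol{\m^{n+1}}$ (the left side of the defining inequality drops by exactly $1$, while the right side drops by $1$ or by $a_m/a_{m-1}\ge 1$, so the inequality is preserved) and each with $u_{m-1}$ or $u_m$ positive, hence in $Q\ol{\m^n}$ by the first reduction. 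This leaves precisely the claimed spanning set. Linear independence is then immediate from $Q\ol{\m^n}\subset Q$: a relation in the quotient projects to a $\C$-linear relation among distinct elements of the standard basis of $\cO_{V,p}/Q$, which is trivial.

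Part (1) then follows by counting. Part (2) gives $\dim_\C(\ol{\m^{n+1}}/Q\ol{\m^n})=\#\{(u_1,\dots,u_{m-2}):0\le u_i\le a_i-1,\ \sum u_i/a_i\ge(n+1)/a_{m-1}\}$, which is manifestly non-increasing in $n$; hence once this dimension reaches $0$ it stays $0$, forcing $\nr(\m)=\brr(\m)$. It vanishes exactly when the maximum value $\sum_{i=1}^{m-2}(a_i-1)/a_i$ of the left side is strictly less than $(n+1)/a_{m-1}$, i.e., when $n+1>a_{m-1}\sum_{i=1}^{m-2}(a_i-1)/a_i$; a small case check (integer versus non-integer value) shows the smallest such $n$ is $\fl{a_{m-1}\sum_{i=1}^{m-2}(a_i-1)/a_i}$, giving the formula. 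The only genuine technical point is the preservation of the defining inequality under the substitution in the second reduction; the rest is straightforward bookkeeping.
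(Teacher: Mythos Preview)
Your proof is correct and follows essentially the same approach as the paper: the two monomial reductions (stripping off $x_{m-1}$ or $x_m$, and replacing $x_k^{a_k}$ via the defining relation) and the linear-independence argument via $\cO_{V,p}/Q\cong \C[x_1,\dots,x_{m-2}]/(x_1^{a_1},\dots,x_{m-2}^{a_{m-2}})$ match the paper's proof exactly. The only difference is organizational: the paper first establishes the formula in (1) directly (exhibiting $\prod_{i=1}^{m-2}x_i^{a_i-1}\in\ol{\m^N}\setminus Q$ to show $\brr(\m)\ge N$) and then reads off (2), whereas you prove (2) first and derive (1) by counting; both routes use the monotonicity of $\dim_\C(\ol{\m^{n+1}}/Q\ol{\m^n})$ to conclude $\nr(\m)=\brr(\m)$.
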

\begin{proof}
Note that $Q\ol{\m^n}$ and $\ol{\m^{n+1}}$ are generated by monomials for every $n\ge 0$ by \proref{p:mmon}.
Let $N=\fl{a_{m-1}\sum_{i=1}^{m-2}(a_i-1)/a_i}$. 
First we prove that $Q\ol{\m^n}=\ol{\m^{n+1}}$ for $n\ge N$.
Let $v=\prod_{i=1}^{m} x_i^{u_i}\in \ol{\m^{n+1}}$. 
By \lemref{l:bar}, we have 
\[
\sum_{i=1}^{m-2}\frac{u_i}{a_i}\ge \frac{n+1-(u_{m-1}+u_m)}{a_{m-1}}
= \frac{n-(u_{m-1}+u_m-1)}{a_{m-1}}.
\]
Therefore, if $u_{m-1}\ge 1$ or $u_m\ge 1$, we have $v/x_{m-1}\in \ol{\m^n}$ or $v/x_{m}\in \ol{\m^n}$, and hence  $v\in Q\ol{\m^n}$.
We consider the case that $u_{m-1}=u_m=0$ and $u_i\ge a_i$ for some $1\le i \le m-2$; we may assume that $i=1$.
Then it follows that $x_1^{u_1}\in x_1^{u_1-a_1}(x_{m-1}^{a_{m-1}}, x_m^{a_m})$,
since $x_1^{a_1}+p_1x_{m-1}^{a_{m-1}}+q_1x_m^{a_m}=0$.
We show that $w_1:=(x_1^{u_1-a_1}x_{m-1}^{a_{m-1}})\prod_{i=2}^{m-2} x_i^{u_i}\in Q\ol{\m^n}$.
Let $w'=w_1/x_{m-1}=(x_1^{u_1-a_1}x_{m-1}^{a_{m-1}-1})\prod_{i=2}^{m-2} x_i^{u_i}$.
Since 
\[
\frac{u_1-a_1}{a_1}+\sum_{i=2}^{m-2}\frac{u_i}{a_i}
\ge \frac{n+1}{a_{m-1}}-1=  \frac{n-(a_{m-1}-1)}{a_{m-1}},
\]
we have $w'\in \ol{\m^n}$ by \lemref{l:bar}.
Thus $w_1=x_{m-1}w'\in Q\ol{\m^n}$.
In a similar way, we also have that $w_2:=(x_1^{u_1-a_1}x_{m}^{a_{m}})\prod_{i=2}^{m-2} x_i^{u_i}\in Q\ol{\m^n}$, since $\frac{n-(a_{m-1}-1)}{a_{m-1}}\ge \frac{n-(a_{m}-1)}{a_{m-1}}$.
Hence we obtain that $v \in (w_1, w_2) \subset Q\ol{\m^n}$.
Next assume that $u_{m-1}=u_m=0$ and $u_i<a_i$ for $1\le i \le m-2$. 
Then we have
\[
\sum_{i=1}^{m-2}\frac{a_i-1}{a_i} \ge  \sum_{i=1}^{m-2}\frac{u_i}{a_i} 
\ge \frac{n+1}{a_{m-1}}.
\]
However this implies that $n \le N-1$.
Hence we obtain that $Q\ol{\m^n}=\ol{\m^{n+1}}$ for $n\ge N$.

Next we prove that $Q\ol{\m^{N-1}}\ne \ol{\m^{N}}$.
Let $v:=\prod_{i=1}^{m-2} x_i^{a_i-1}$. Then $v\not\in Q$,
because $\cO_{V,p}/Q=\C\{x_1, \dots, x_m\}/(x_1^{a_1}, \dots, x_{m-2}^{a_{m-2}}, x_{m-1}, x_m)$.
However, since 
\[
\sum_{i=1}^{m-2}\frac{a_i-1}{a_i} \ge \frac{N}{a_{m-1}},
\]
we have $v\in \ol{\m^N}$ by \lemref{l:bar}. Hence we obtain that $\brr(\m)=N$.

From the arguments above, we see that (2) holds, because 
any non-trivial linear combinations of those monomials is not in the ideal $P+(x_{m-1}, x_m)=(x_1^{a_1}, \dots, x_{m-2}^{a_{m-2}}, x_{m-1}, x_m)$.
Since $\dim_{\C}(\ol{\m^{n+1}}/Q\ol{\m^n})$ is  a non-increasing function of $n$, we have $\nr(\m)=\brr(\m)$ (cf. \proref{p:qr}).
\end{proof}

\begin{ex}
If $m=3$, we have
\begin{align*}
\nr(\m)&=\fl{\frac{a_2(a_1-1)}{a_1}}, \\
\dim_{\C}(\ol{\m^{n+1}}/Q\ol{\m^n})
&=\sharp \defset{u\in \Z}{\frac{a_1(n+1)}{a_2}\le u \le a_1-1}\\
&=\max \left(a_1-\ce{\frac{a_1(n+1)}{a_2}}, 0\right).
\end{align*}
\end{ex}

The formula for $q(\m)$ in \cite[3.8]{OWYnrbr} is generalized as follows.

\begin{prop}
Let $p(n+1)=\dim_{\C}(\ol{\m^{n+1}}/Q\ol{\m^n})$ and $q(n)=h^1(\cO_Y(-nM_Y))$  for $n\ge 0$. Then we have the following:
\[
q(n)=p_g\V+\frac{n}{2}(M_Y^2-M_YK_Y)+\sum_{i=1}^n(n+1-i)p(i).
\]
(Note that the same formula holds for any normal surface singularity.)
\end{prop}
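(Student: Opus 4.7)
The plan is to observe that both sides of the asserted equality satisfy the same second-order linear recurrence, and then verify agreement at two base values. Applying \proref{p:qr}(1) to $I=\m$, which is represented by $Z=M_Y$, gives
\[
q(n+1)-2q(n)+q(n-1)=p(n+1) \qquad (n\ge 1).
\]
Denote the right-hand side of the claimed formula by $R(n)$. The linear term $\tfrac{n}{2}(M_Y^2-M_YK_Y)$ has vanishing second difference, and a short telescoping computation shows that
\[
\sum_{i=1}^{n+1}(n+2-i)p(i)-2\sum_{i=1}^{n}(n+1-i)p(i)+\sum_{i=1}^{n-1}(n-i)p(i)=p(n+1);
\]
hence $R$ obeys the same recursion as $q$, so $q-R$ is affine in $n$ and the formula reduces to matching two base values.

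The case $n=0$ is immediate: $R(0)=p_g=h^1(\cO_Y)=q(0)$. For $n=1$, I would apply Riemann--Roch to the short exact sequence $0\to \cO_Y(-M_Y)\to \cO_Y\to \cO_{M_Y}\to 0$. Pushing forward to $V$ and using $\pi_*\cO_Y(-M_Y)=\ol{\m}=\m$ (the integral closure of a maximal ideal equals itself), the associated long exact sequence takes the form
\[
0\to \m \to \cO_{V,p}\to H^0(\cO_{M_Y})\to H^1(\cO_Y(-M_Y))\to H^1(\cO_Y)\to H^1(\cO_{M_Y})\to 0,
\]
and comparing alternating sums of $\C$-dimensions yields
\[
q(1)=p_g+\chi(\cO_{M_Y})-1=p_g-\tfrac{1}{2}(M_Y^2+M_YK_Y)-1.
\]

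What remains—the only mild obstacle—is to identify $p(1)$ explicitly. Since $\cO_{V,p}$ is two-dimensional Cohen--Macaulay and the generators of $Q$ form a system of parameters, they are a regular sequence, so by the Koszul resolution $\dim_{\C}(\cO_{V,p}/Q)=e(Q)=e(\m)=-M_Y^2$. Subtracting $\dim_{\C}(\cO_{V,p}/\m)=1$ gives $p(1)=\dim_{\C}(\m/Q)=-M_Y^2-1$. Substituting into $R(1)=p_g+\tfrac{1}{2}(M_Y^2-M_YK_Y)+p(1)$ reproduces precisely the Riemann--Roch value of $q(1)$ just computed, completing the verification of the base cases and hence of the formula. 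Nothing in the argument uses the Brieskorn structure, which accounts for the parenthetical remark about arbitrary normal surface singularities.
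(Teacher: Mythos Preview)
Your proof is correct and essentially the same as the paper's. Both arguments rest on the second-difference identity $q(n+1)-2q(n)+q(n-1)=p(n+1)$ from \proref{p:qr}(1), the Riemann--Roch computation of $q(1)-q(0)$ via the exact sequence $0\to\cO_Y(-M_Y)\to\cO_Y\to\cO_{M_Y}\to0$, and the identification $p(1)=-M_Y^2-1$ through multiplicity; the only difference is that the paper telescopes the second differences into a formula for $q(n)-q(n-1)$ and then sums, whereas you phrase it as matching a second-order recurrence plus two initial values.
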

\begin{proof}
It is well-known that the multiplicity of $\cO_{V,p}$ coincides with $\dim_{\C}\cO_{V,p}/Q$ (e.g., \cite[11.2.2]{HS-book}).
Thus we have $p(1)=\dim_{\C}(\m/Q)=-M_Y^2-1$.
From the exact sequence
\[
0\to \cO_Y(-M_Y) \to \cO_Y\to \cO_{M_Y} \to 0,
\]
we have 
\[
q(1)-q(0)=\chi(\cO_{M_Y})-1=\chi(\cO_{M_Y})+M_Y^2+p(1)=\frac{1}{2}(M_Y^2-M_YK_Y)+p(1).
\]
For $n\ge 1$, it follows from \proref{p:qr} (1) 
that 
\[
q(n)-q(n-1)=q(1)-q(0)+\sum_{i=2}^{n}p(i)
=\frac{1}{2}(M_Y^2-M_YK_Y)+\sum_{i=1}^{n}p(i).
\]
Hence we obtain
\[
q(n)-q(0)=\frac{n}{2}(M_Y^2-M_YK_Y)+\sum_{i=1}^{n}(n+1-i)p(i).
\qedhere
\]
\end{proof}

\begin{rem}
The invariant $M_Y^2-M_YK_Y$ can be computed from $a_1, \dots, a_m$ as follows.
First we have $M_Y^2=-\mult\V=-\prod_{i=1}^{m-2}a_i$  (see \cite[6.3]{MO}).
On the other hand, from \eqref{eq:Wi}, we have 
\[
M_Y^2+M_YK_Y=M_X^2+M_XK_X+2(K_{Y/X})^2=2p_a(M_X)-2-2\delta\hat g_m.
\]
We have seen a formula for $p_a(M_X)$ in \thmref{t:pf}.
\end{rem}

\subsection{Elliptic singularities of Brieskorn type}\label{ss:ellBCI}

We classify the exponents $(a_1, \dots, a_m)$ such that $\V$ is elliptic, applying the formula for $\brr(\m)$.

\begin{thm}\label{t:BCIell}
$\V$ is elliptic if and only if  $(a_1, \dots, a_m)$ is one of the following.
\begin{enumerate}
\item $(a_1, a_2, a_3)=(2,3,a)$, $a\ge 6$.
\item $(a_1, a_2, a_3)=(2,4,a)$, $a\ge 4$.
\item $(a_1, a_2, a_3)=(2,5,a)$, $5\le a \le 9$.
\item $(a_1, a_2, a_3)=(3,3,a)$, $a\ge 3$.
\item $(a_1, a_2, a_3)=(3,4,a)$, $4\le a \le 5$.
\item $(a_1, a_2, a_3, a_4)=(2,2,2,a)$, $a\ge 2$.
\end{enumerate}
\end{thm}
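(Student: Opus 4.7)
The plan is to combine the formula $\brr(\m)=\fl{a_{m-1}\sum_{i=1}^{m-2}(a_i-1)/a_i}$ of \thmref{t:nr} with the vanishing result of \thmref{t:ellr2} to cut the exponent vectors down to a finite list, and then apply the explicit formula of \thmref{t:pf} for $p_f\V$ to decide, within that list, which tuples are elliptic.

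For the necessity direction, I would start by noting that if $\V$ is elliptic then $\brr\V=2$ by \thmref{t:ellr2}. Since $\m$ is itself an $\m$-primary integrally closed ideal, $\brr(\m)\le \brr\V=2$, and hence by \thmref{t:nr}
\[
a_{m-1}\sum_{i=1}^{m-2}\frac{a_i-1}{a_i}<3.
\]
Because every summand is at least $1/2$ and $a_{m-1}\ge 2$, this forces $m\le 4$: for $m\ge 5$ the sum contains three or more terms of value $\ge 1/2$, giving a product $\ge 3$. For $m=4$ a short inspection using $a_1\le a_2\le a_3$ rules out every possibility except $a_1=a_2=a_3=2$, leaving the family $(2,2,2,a_4)$ with $a_4\ge 2$. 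For $m=3$ the inequality $a_2(a_1-1)/a_1<3$ together with $a_1\le a_2$ pins down exactly the six pairs $(a_1,a_2)\in\{(2,2),(2,3),(2,4),(2,5),(3,3),(3,4)\}$, with $a_3\ge a_2$ arbitrary.

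Next I would take each of these families in turn and compute $\ell,\alpha_i,\alpha,\hat g,\hat g_w,\lambda_i$ as functions of the free exponent $a_m$, then select the appropriate branch of \thmref{t:pf} according to whether $\lambda_m\ge\alpha$ or $\lambda_m\le\alpha$ and simplify to extract $p_f\V$. This isolates precisely the subranges with $p_f=1$: the families $(2,2,a)$ and $(2,3,a)$ with $a\le 5$ give $p_f=0$ (rational double points); for $(2,5,a)$ one finds $p_f=1$ iff $5\le a\le 9$, with $(2,5,10)$ yielding $p_f=2$ as the first failure; and for $(3,4,a)$ one finds $p_f=1$ iff $4\le a\le 5$, with $(3,4,6)$ yielding $p_f=2$. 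On the other hand, the families $(2,3,a)$ with $a\ge 6$, $(2,4,a)$ with $a\ge 4$, $(3,3,a)$ with $a\ge 3$, and $(2,2,2,a)$ with $a\ge 2$ yield $p_f=1$ uniformly, recovering exactly the list (1)--(6).

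The main obstacle is the bookkeeping in the second step: the values of $\ell$ and the $\alpha_i$ jump with the divisibility class of $a_m$ modulo the smaller exponents (for instance within the $(2,3,a)$ family one must treat $\gcd(a,6)\in\{1,2,3,6\}$ separately), and the two branches of \thmref{t:pf} have to be handled according to the sign of $\lambda_m-\alpha$. Nevertheless, once these sub-cases are laid out, each reduces to a short numerical verification of the identity $(m-2)\hat g-\sum\hat g_w/\alpha_w=(\alpha-1)\hat g/\ell$ (or its $\lambda_m\le \alpha$ counterpart), so the remaining work is routine arithmetic rather than new geometry.
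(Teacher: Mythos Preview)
Your proposal is correct and follows essentially the same route as the paper: both arguments use \thmref{t:ellr2} to bound $\brr(\m)\le 2$, invoke the formula of \thmref{t:nr} to force $m\le 4$ and to reduce the $(a_1,\dots,a_{m-1})$-prefix to a short explicit list, and then verify or refute $p_f=1$ case by case via \thmref{t:pf}. The paper is slightly more economical in that it observes $\alpha\ge\lambda_m$ holds throughout the relevant list (so only the second branch of \thmref{t:pf} is needed and the divisibility splits largely collapse), but this is a simplification of bookkeeping rather than a different idea.
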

\begin{proof}
For the case (1)--(6) in the theorem, 
we can check that $\alpha\ge \lambda_m$ and obtain $p_f\V=1$ 
using \thmref{t:pf}.

Assume that $\V$ is elliptic. By \thmref{t:ellr2} and \thmref{t:nr}, we have
\[
3> a_{m-1}\sum_{i=1}^{m-2}\frac{a_i-1}{a_i} 
\ge a_{m-1}(m-2)/2 \ge m-2.
\]
Hence $m\le 4$.
We first consider the case $m=4$.
We have $a_3<3$, and thus $a_1=a_2=a_3=2$.
Then $\alpha/ \lambda_4=a_4/2\ge 1$ and $p_f\V=1$ by \thmref{t:pf}, 

Next assume that $m=3$.
Then we have 
 $\brr(\m)=\fl{\dfrac{(a_1-1)a_2}{a_1}}\le 2$, and thus  $(a_1-1)(a_2-3)\le 2$. 
If $a_2=2$, then $a_1=2$ and $\V$ is a rational.
Hence $a_2\ge 3$ and the list of $(a_1, a_2)$ is as follows: 
\[
(2,3), (2,4), (2,5), (3,3), (3,4).
\]
We can see that $\alpha\ge \lambda_3$ for those cases. So it follows from \thmref{t:pf} that
\[
p_f\V=\frac{1}{2}\left\{a_1a_2-a_1-a_2-(2\ce{\lcm(a_1,a_2)/a_3}-1)\gcd(a_1,a_2)\right\}+1.
\]
Let us look at each case.
\begin{enumerate}
\item The case where $(a_1,a_2)=(2,3)$.
We know that $\V$ is rational if $a_3 \le 5$.
Hence $a_3 \ge 6$. 
We have $\alpha/\lambda_3=a_3/\gcd(6,a_3)$ and $p_f\V=1$.

\item The case where $(a_1,a_2)=(2,4)$, $a_3\ge 4$.
We have $\alpha/\lambda_3=a_3/4$ if $4 \mid a_3$,  $\alpha/\lambda_3=a_3/2$ otherwise, and  $p_f\V=1$.

\item The case where $(a_1,a_2)=(2,5)$, $a_3\ge 5$.
We have $\alpha/\lambda_3=a_3/\gcd(10,a_3)$ and  $p_f\V=3-\ce{10/a_3}$.
Sine $p_f\V=1$, we have $a_3 \le 9$.

\item The case where $(a_1,a_2)=(3,3)$, $a_3\ge 3$.
We have $\alpha/\lambda_3=a_3/3$  and  $p_f\V=1$ for all $a_3 \ge 3$.

\item The case where $(a_1,a_2)=(3,4)$, $a_3\ge 4$.
We have $\alpha/\lambda_3=a_3/\gcd(12,a_3)$ and 
 $p_f\V=4-\ce{12/a_3}$.
Sine $p_f\V=1$, we have $a_3 \le 5$.
\end{enumerate}
Hence we have proved the theorem.
\end{proof}

From the proof of \thmref{t:BCIell}, we obtain that 
$\brr(\m)=2$ and  $p_f\V\ge 2$ if 
$(a_1,a_2,a_3)=(2,5,a)$ with $a \ge 10$ or $(3,4,a)$ with $a \ge 6$.
For the cases $(2,5,a)$ with $a \ge 10$ and $(3,4,a)$ with $a \ge 8$, 
letting  $Q=(y,z^2)$ and $I=\ol{Q}$, we have $\brr(I)\ge 3$.
Hence we obtain the following.

\begin{prop}[{\cite[4.5]{OWYnrbr}}]\label{p:br=2}
$\brr\V=2$ if and only if $p_f\V=1$, except for the cases $(a_1, \dots, a_m)=(3,4,6)$, $(3,4,7)$.
\end{prop}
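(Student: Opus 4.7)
The forward direction, $p_f\V = 1 \Rightarrow \brr\V = 2$, is immediate from \thmref{t:ellr2}. For the converse I argue contrapositively: assuming $p_f\V \ne 1$ and $(a_1,\ldots,a_m) \notin \{(3,4,6),(3,4,7)\}$, I want to exhibit an $\m$-primary integrally closed ideal $I$ with $\brr(I) \ne 2$. If $p_f\V = 0$ then $\V$ is rational and $\brr\V = 1$ by \proref{p:rat}, so henceforth assume $p_f\V \ge 2$ and seek $I$ with $\brr(I) \ge 3$.

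The first test ideal is $I = \m$ itself. By \thmref{t:nr}, $\brr(\m) = \fl{a_{m-1}\sum_{i=1}^{m-2}(a_i-1)/a_i}$, so a direct case analysis---splitting on $m$ and using $a_1\le\cdots\le a_m$---shows $\brr(\m)\le 2$ forces either $m = 4$ with $(a_1,\ldots,a_4) = (2,2,2,a)$, or $m = 3$ with $(a_1,a_2)\in \{(2,2),(2,3),(2,4),(2,5),(3,3),(3,4)\}$; the case $m\ge 5$ is excluded because then $\brr(\m)\ge m-2\ge 3$. Comparing this list against \thmref{t:BCIell} and discarding the rational subcases, the only non-elliptic exponents with $\brr(\m)\le 2$ are the two families $(2,5,a)$ with $a\ge 10$ and $(3,4,a)$ with $a\ge 6$.

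For these remaining families I enlarge the test ideal: take $Q = (y,z^2)$ and $I = \ol{Q}$, so that $Q$ is a minimal reduction of $I$. By \proref{p:qr} (2) it suffices to show $q_I(1)\ne q_I(2)$, which I would do by exhibiting a monomial in $\ol{I^3}\setminus Q\ol{I^2}$. The strategy is to replay the argument of \thmref{t:nr} with this new reduction: represent $I$ by an anti-nef cycle on the resolution $Y$ using the weighted-homogeneous structure of the defining equations, check that the associated graded algebra of the natural monomial filtration is reduced so that \lemref{l:filt} identifies the $\ol{I^n}$ as monomial ideals, and then apply a \lemref{l:bar}-style exponent inequality to test membership. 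The main obstacle is this cycle-and-monomial computation for $\ol{(y,z^2)}$; it is also where the two exceptions arise, since the required witness monomial should be available precisely when $a\ge 10$ in the $(2,5,a)$ family and $a\ge 8$ in the $(3,4,a)$ family, leaving $(3,4,6)$ and $(3,4,7)$ as the boundary cases where no such witness can be produced by this method.
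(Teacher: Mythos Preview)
Your approach is essentially the same as the paper's: the forward direction is \thmref{t:ellr2}, and for the converse you reduce via \thmref{t:nr} and \thmref{t:BCIell} to the two residual families $(2,5,a)$ with $a\ge 10$ and $(3,4,a)$ with $a\ge 6$, then for all but $(3,4,6)$, $(3,4,7)$ you take $Q=(y,z^2)$, $I=\ol Q$, and show $\brr(I)\ge 3$---exactly the argument sketched in the paragraph preceding the proposition (the full details being deferred to \cite{OWYnrbr}). One small expository wrinkle: the cleanest route to $\brr(I)\ge 3$ is directly via $\ol{I^3}\ne Q\ol{I^2}$ rather than through $q_I(1)\ne q_I(2)$, though the two are equivalent by \proref{p:qr}.
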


For the reader's convenience, we put some information about the two exceptional cases above.  
Both singularities have $p_g=3$ and $p_f=2$.
The weighted dual graph $\Gamma_1$ (resp. $\Gamma_2$) of $\{x^3+y^4+z^6=0\}$ (resp. $\{x^3+y^4+z^7=0\}$) is as in \figref{fig:G}.
\begin{figure}[htb]
$
\xy
 (-18,0)*+{\Gamma_1:}; 
 (0,0)*+{-2}*\cir<9pt>{}="E"*++!D(-1.5){[1]}; 
(10,0)*+{-2 }*\cir<9pt>{}="A_1";
(-10,0)*+{-2 }*\cir<9pt>{}="B_1"; 
(0,10)*+{-2}*\cir<9pt>{}="D_1"; 
\ar @{-} "E" ;"A_1"  
\ar @{-} "E" ;"B_1" 
\ar @{-} "E" ;"D_1"  
\endxy
$
\hspace{1cm}
$
\xy
 (-28,0)*+{\Gamma_2:}; 
 (0,0)*+{-2}*\cir<9pt>{}="E"; 
(10,0)*+{-2 }*\cir<9pt>{}="A_1";
(20,0)*+{-2 }*\cir<9pt>{}="A_2"; 
(30,0)*+{-2 }*\cir<9pt>{}="A_3"; 
(-10,0)*+{-2 }*\cir<9pt>{}="B_1"; 
(-20,0)*+{-2 }*\cir<9pt>{}="B_2"; 
(0,10)*+{-2}*\cir<9pt>{}="D_1"; 
(10,10)*+{-4}*\cir<9pt>{}="D_2"; 

\ar @{-} "E" ;"A_1"  
\ar @{-} "E" ;"B_1" 
\ar @{-} "E" ;"D_1"  

\ar @{-} "A_1"; "A_2"   
\ar @{-} "A_2"; "A_3"   
\ar @{-} "B_1"; "B_2" 
\ar @{-} "D_1" ; "D_2"  
\endxy
$
\caption{}\label{fig:G}
\end{figure}

As we have seen above, the equality $\brr\V=\brr(\m)$ does not hold in general (see also \remref{r:mpg}).

\begin{prob}
For a given normal surface singularity $\V$, characterize $\m$-primary integrally closed ideals $I\subset \cO_{V,p}$ (or, cycles which represent $I$) such that $\brr\V=\brr(I)$.
Characterize normal surface singularities $\V$ 
such that $\brr\V=\brr(\m)$.
\end{prob}

%
%

\providecommand{\bysame}{\leavevmode\hbox to3em{\hrulefill}\thinspace}
\providecommand{\MR}{\relax\ifhmode\unskip\space\fi MR }
\providecommand{\MRhref}[2]{%
  \href{http://www.ams.org/mathscinet-getitem?mr=#1}{#2}
}
\providecommand{\href}[2]{#2}

\end{document}